
\documentclass[12pt]{amsart}

\usepackage{amsmath, amssymb,amsmath, graphicx, xypic }
\usepackage[small]{caption}
\usepackage{txfonts}

\numberwithin{equation}{section}

\def\N{\mathbb{N}} 
\def\Z{\mathbb{Z}}
\def\Q{\mathbb{Q}}
\def\R{\mathbb{R}}
\def\K{\mathbb{K}}

\def\mc{\mathcal}

\newtheorem{theorem}{Theorem}[section]
\newtheorem{lemma}[theorem]{Lemma}
\newtheorem{proposition}[theorem]{Proposition}
\newtheorem{corollary}[theorem]{Corollary}
\newtheorem{definition}[theorem]{Definition}

\newtheorem{remark}[theorem]{Remark}

\newtheorem{question}[theorem]{Question}

\DeclareMathOperator{\area}{\mathsf{Area}}

\begin{document}

\title[]{A note on  fine graphs and homological isoperimetric inequalities}
\author[E.~Mart\'inez-Pedroza]{Eduardo Mart\'inez-Pedroza}
  \address{Memorial University\\ St. John's, Newfoundland, Canada A1C 5S7}
  \email{emartinezped@mun.ca}
\subjclass[2000]{ 20F67, 05C10, 20J05, 57M60}
\keywords{Isoperimetric Functions, Dehn functions, Hyperbolic Groups}

\maketitle

\begin{abstract}
In the framework of homological characterizations of relative hyperbolicity,  Groves and Manning posed the question of whether  a simply connected $2$-complex $X$ with a  linear homological  isoperimetric inequality, a bound on the length of attaching maps of $2$-cells  and finitely many $2$-cells adjacent to any edge must have a fine $1$-skeleton.  We provide a positive answer to this question.  We revisit a homological characterization of relative hyperbolicity, and show that a group $G$ is hyperbolic relative to a collection of subgroups $\mc P$ if and only if $G$ acts cocompactly with finite edge stabilizers on an connected $2$-dimensional cell complex with a linear homological isoperimetric inequality and $\mc P$ is a collection of representatives  of conjugacy classes of vertex stabilizers.
\end{abstract}

\section{Introduction}

In this article, we investigate  the relation between the notion of fine graph and homological isoperimetric inequalities of combinatorial complexes. 
We work in the category of combinatorial complexes and combinatorial maps as defined, for example, in~\cite[Chapter I.8, Appendix]{BrHa99}.  All group actions on complexes are by combinatorial maps.

The notion of fine graph was introduced by Bowditch in his investigations on the theory of relatively hyperbolic groups~\cite{Bo12}. 

\begin{definition}[Fine graph]
A \emph{graph} $\Gamma$ is a $1$-dimensional combinatorial complex.   A \emph{circuit} is a simple closed combinatorial path. A graph $\Gamma$ is \emph{fine} if for every edge $e$ and each integer $L>0$, the number of circuits of length at most $L$ which contain $e$ is finite.  
\end{definition}

Let $\K$ denote either $\Z$, $\Q$ or $\R$.   For a cell complex $X$, the cellular chain group $C_i(X, \K)$  is a free $\K$-module  with a natural $\ell_1$-norm induced by a basis formed by the collection of all $i$-dimensional cells of $X$, each cell with a chosen orientation from each pair of opposite orientations. This norm, denoted by $\|\gamma\|_1$, is the sum of the absolute value of the coefficients in the unique representation of the chain $\gamma$ as a linear combination over $\K$ of the elements of the basis. 

\begin{definition}[Homological Dehn function of a cell complex]\label{def:FVX}   The \emph{homological Dehn function of a cell complex $X$ over $\K$} is the function $FV_{X, \K} \colon\N \to \K \cup \{\infty\}$ defined as 
\[FV_{X, \K} (k) = \sup \left \{ \ \| \gamma   \|_{\partial}  \colon \gamma \in Z_1(X, \Z), \ \| \gamma \|_1 \leq k \ \right \},\]
where 
\[ \| \gamma  \|_{\partial, \K}  = \inf \left \{ \ \| \mu \|_1 \colon \mu \in C_{2}(X, \K), \ \partial ( \mu ) = \gamma \ \right \},\]
where the supremum and infimum of the empty set are defined as zero and $\infty$ respectively. In words,  $FV_{X, \K}(k)$ is the most efficient upper bound on the size of fillings by $2$-chains over $\K$ of $1$-cycles over $\Z$ of size at most $k$.
\end{definition}

 The following result  exhibits the natural relation between the notions of fine graph and homological Dehn function in the context of $G$-spaces.  Observe that a necessary condition for $FV_{X, \K}$ being finite-valued  is that   $X$ has  trivial first homology group over $\K$.  

\begin{theorem}\label{prop:main}
Let $X$ be a cocompact $G$-cell complex with finite stabilizers of $1$-cells.  The following two statements are equivalent:
\begin{enumerate}
\item  $X$ has fine $1$-skeleton and $H_1(X, \Z)$ is trivial,
\item \label{main2} $FV_{X, \Z}(k)<\infty$ for any integer $k$. 
\end{enumerate}
\end{theorem}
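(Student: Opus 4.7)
\medskip
\noindent \emph{Proof plan.} Both implications rely on the following preliminary observation: under the hypotheses on $X$, every edge $e'$ of $X$ is contained in the boundary of only finitely many $2$-cells. Indeed, cocompactness yields finitely many $G$-orbits of $2$-cells with attaching maps of bounded length (say by some $L_{\max}$), and finiteness of edge stabilizers then restricts each orbit to finitely many translates with $e'$ on their boundary.

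For $(1) \Rightarrow (2)$, given $\gamma \in Z_1(X, \Z)$ with $\|\gamma\|_1 \leq k$, an Eulerian decomposition writes $\gamma = \sum_{j=1}^{N} \epsilon_j c_j$ with each $c_j$ a circuit of length at most $k$ and $N \leq k$. Cocompactness provides finitely many $G$-orbits of edges, and fineness of $X^{(1)}$ bounds the number of circuits of length $\leq k$ through each orbit representative; hence there are only finitely many $G$-orbits of circuits of length $\leq k$. The hypothesis $H_1(X, \Z) = 0$ supplies a finite filling of each orbit representative, and since $G$ acts combinatorially and preserves $\ell_1$-norms, the maximum $M(k)$ of these finitely many filling norms bounds the filling norm of every circuit of length $\leq k$. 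Summing over $j$ yields $FV_{X, \Z}(k) \leq k \cdot M(k) < \infty$.

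For $(2) \Rightarrow (1)$, the vanishing of $H_1(X, \Z)$ is immediate, since any $1$-cycle admits a $\Z$-filling of norm at most $FV_{X, \Z}(\|\gamma\|_1) < \infty$. For fineness, fix an edge $e$ and $L > 0$, set $B = FV_{X, \Z}(L)$, and for each circuit $c$ through $e$ of length $\leq L$ choose a filling $\mu_c \in C_2(X, \Z)$ with $\| \mu_c \|_1 \leq B$. Decomposing $\mu_c$ along the edge-connected components of its support writes $c$ as an edge-disjoint sum of $1$-cycles; since a simple circuit cannot split nontrivially in this way (at each vertex both incident edges of the circuit must lie in the same summand), all but one summand vanishes, so one may replace $\mu_c$ by that summand and assume its closed support $Y_c$ is edge-connected and contains $e$ on the boundary of one of its $2$-cells. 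One then enumerates the $2$-cells of $Y_c$ as $f_1, \ldots, f_m$ with $m \leq B$ so that $f_1$ contains $e$ and each subsequent $f_i$ shares an edge with $f_1 \cup \cdots \cup f_{i-1}$; at each step the preliminary observation bounds the number of choices by a constant depending only on $B$ and $L_{\max}$, so only finitely many subcomplexes $Y_c$ can arise. Since each finite $Y_c$ contains only finitely many circuits of length $\leq L$ through $e$, fineness follows.

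The main obstacle is that $X^{(1)}$ need not be locally finite: a vertex may have infinitely many incident edges, so one cannot bound length-$L$ circuits through $e$ by local counts at vertices. The preliminary observation on $2$-cells at a fixed edge, together with the edge-connectivity reduction for fillings, is what forces each $\mu_c$ to live in one of finitely many bounded subcomplexes through $e$, and thereby controls the count of circuits.
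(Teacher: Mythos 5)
Your proof is correct and takes essentially the same route as the paper: the implication $(1)\Rightarrow(2)$ is the paper's argument via decomposing a cycle into circuits and using that fineness plus cocompactness gives finitely many $G$-orbits of circuits of bounded length, and the implication $(2)\Rightarrow(1)$ is the paper's ``special $2$-chain'' counting argument, merely repackaged: you reduce an arbitrary bounded filling to an edge-connected component of its support, where the paper orders a minimal-norm filling so that successive partial boundary sums are non-disjoint. Both versions rest on the same two pillars (a circuit's cycle does not split as a sum of disjoint nonzero cycles, and each edge meets finitely many $2$-cells), so no changes are needed.
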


\begin{definition}[Homological isoperimetric inequalities]\label{def:hom-isop-ineq}
Let $X$ be a  complex. We  shall say that $X$ satisfies a \emph{homological isoperimetric inequality over $\K$} if  $FV_{X, \K}(k)<\infty$ for any integer $k$, and we say that $X$ satisfies a \emph{linear homological isoperimetric inequality over $\K$} if  there is a constant $A\geq 0$ such that $FV_{X, \K}(k)\leq kA$.
\end{definition}

The definition of linear homological isoperimetric inequality  above is equivalent to the definition used by Groves and Manning~\cite[Definition 2.28]{GrMa09}, see Proposition~\ref{prop:definitions}. The following question was raised in~\cite{GrMa09}.

\begin{question}\label{question} \cite[Question. 2.51]{GrMa09}
 Let $X$ be a simply connected $2$-complex with a homological (linear?) isoperimetric inequality, a bound on the length of attaching maps of $2$-cells  and finitely many $2$-cells adjacent to any edge. Must $X$ be fine?
\end{question}

Question~\ref{question}   was raised in the context of homological isoperimetric inequalities over the rational numbers. It can also be interpreted in the context of homological isoperimetric inequalities over the integers. In both cases the question is answered in the positive by Theorem~\ref{thm:answer} below, but we remark that in the rational case our argument requires the suggested hypothesis of a linear isoperimetric inequality.

\begin{theorem}\label{thm:answer} Let $X$ be a cell complex such that each $1$-cell  is adjacent to finitely many $2$-cells. The $1$-skeleton of $X$ is a fine graph if either
\begin{enumerate}
\item $FV_{X, \Z}(k)<\infty$ for any integer $k$, or 
\item \label{answer2}  $X$ is  simply-connected, there is a bound on the length of attaching maps of $2$-cells, and there is $C\geq 0$ such that  $FV_{X, \Q}(k)\leq Ck$ for every $k$. 
\end{enumerate}
\end{theorem}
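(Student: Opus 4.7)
My plan is to run a single dual-graph argument for both parts. Let $\Delta$ be the graph whose vertices are the $2$-cells of $X$, with two vertices adjacent when the corresponding $2$-cells share a $1$-cell. Because every $2$-cell has a finite attaching map and each $1$-cell lies in only finitely many $2$-cells, $\Delta$ is locally finite. Fix a $1$-cell $e$ and an integer $L\geq 1$, write $\mathcal C$ for the set of circuits of length at most $L$ containing $e$, and let $\mathcal S_e$ denote the finite set of $2$-cells incident to $e$. Any two $2$-cells of $\mathcal S_e$ share $e$ and so are adjacent in $\Delta$.

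For part~(1), I would, for each $c\in\mathcal C$, fix an integer filling $\mu$ of $c$ with $\|\mu\|_1\leq M:=FV_{X,\Z}(L)$. Since the coefficient of $e$ in $\partial\mu=c$ is $\pm 1$, some cell of $\mathcal S_e$ lies in $\mathrm{supp}(\mu)$; let $\mathcal M$ be the $\Delta$-component of $\mathrm{supp}(\mu)$ containing $\mathcal S_e\cap\mathrm{supp}(\mu)$. Because $\mathcal M$ and $\mathrm{supp}(\mu)\setminus\mathcal M$ share no $1$-cell, the chain $\partial(\mu|_{\mathcal M})$ is supported on the edges of $c$; and since no proper subset of the edges of the simple closed path $c$ carries a nonzero $1$-cycle, $\partial(\mu|_{\mathcal M})$ is a scalar multiple of $c$. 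Matching coefficients at $e$ (using that all $e$-incident cells lie in $\mathcal M$) forces $\partial(\mu|_{\mathcal M})=c$, so $c$ is recovered from $\mu|_{\mathcal M}$, which is an integer $2$-chain of norm at most $M$ whose support is a connected subgraph of $\Delta$ of size at most $M$ containing some vertex of $\mathcal S_e$. Local finiteness of $\Delta$ bounds the number of such subgraphs, and bounded integer coefficients on a bounded support yield finitely many such chains; hence $\mathcal C$ is finite.

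For part~(2), I would run the same decomposition on a rational filling $\mu$ with $\|\mu\|_1\leq CL$, obtaining $\partial(\mu|_{\mathcal M})=c$ and $\|\mu|_{\mathcal M}\|_1\leq CL$. The obstacle, and the main step I expect to require real work, is that rational coefficients need not be bounded below in absolute value, so $\|\mu\|_1\leq CL$ no longer controls $|\mathcal M|$. The crux of part~(2) is therefore to arrange that $|\mathcal M|$ is bounded by a function of $L$, $C$, and the bound $K$ on attaching-map lengths. My plan is to attack this by passing to a filling whose support is minimal for inclusion---a standard sparsification obtained by adjusting $\mu$ by elements of $Z_2(X,\Q)$---and then combining simple connectedness of $X$, the bound on $\|\mu|_{\mathcal M}\|_1$, and the length bound $K$ to confine the support to a bounded $\Delta$-neighbourhood of $\mathcal S_e$. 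Once $|\mathcal M|$ is bounded, local finiteness of $\Delta$ gives finitely many candidates for $\mathrm{supp}(\mu|_{\mathcal M})$; each such candidate has a finite $1$-skeleton containing $c$, and integer $1$-cycles of norm at most $L$ supported on a finite graph are finitely many, yielding finitely many possible circuits $c$.
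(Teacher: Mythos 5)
Your argument for part (1) is correct and is essentially the paper's proof of Proposition~\ref{prop:fine-crit} in different clothing: your restriction of a filling to the $\Delta$-component meeting $\mathcal S_e$, justified by the fact that a circuit-cycle cannot split as a sum of disjointly supported nonzero cycles, is exactly the paper's ``special $2$-chain'' decomposition (Lemma~\ref{lem:key-observation} plus Claims 1 and 2 there), with the connectivity bookkeeping done in the dual graph rather than by ordering the cells $f_1,\dots,f_n$. No complaint there.

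Part (2), however, has a genuine gap, and you have located it precisely but not closed it. A rational filling $\mu$ with $\|\mu\|_1\le CL$ gives no bound whatsoever on $|\mathrm{supp}(\mu)|$, and passing to a filling whose support is minimal under inclusion does not repair this: the fillings of $c$ form an affine subspace $\mu_0+Z_2(X,\Q)$, and an inclusion-minimal element of it can still have arbitrarily large support with arbitrarily small coefficients; neither simple connectedness nor the bound $K$ on attaching maps supplies a positive lower bound on the nonzero coefficients, which is what you would actually need to convert $\|\mu\|_1\le CL$ into a bound on $|\mathcal M|$. This is not a technicality: the difficulty of extracting integral (or bounded-support) fillings from rational ones is exactly Gersten's open question on whether $FV_{X,\Z}\le C\cdot FV_{X,\Q}$, which the paper cites immediately after the theorem, and the paper explicitly leaves open whether mere finiteness of $FV_{X,\Q}$ implies fineness. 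Your sketch never uses the \emph{linearity} of $FV_{X,\Q}$ beyond the bound $CL$, so if it worked it would settle that open question. The paper's actual route for part (2) is quite different and uses linearity essentially: simple connectedness, the bound on attaching maps, and the linear bound on $FV_{X,\Q}$ give hyperbolicity of the $1$-skeleton via Theorem~\ref{prop:hyperbolicity} (Groves--Manning, resting on the Gersten/Mineyev arguments via Papasoglu and Ol'shanskii); hyperbolicity yields a linear isoperimetric function in the van Kampen sense (Proposition~\ref{lem:simply-connected-complex}); and then Proposition~\ref{prop:finiteFV} with Remark~\ref{rem:linearclosure} converts this into a linear bound on $FV_{X,\Z}$, at which point part (1) applies. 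You need some substitute for that hyperbolicity step; the sparsification plan as stated is not one.
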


The question of whether in Theorem~\ref{thm:answer}, for the rational case,  the assumption  $FV_{X, \Q}(k)<\infty$ for every $k$ is sufficient to conclude fineness remains open. 
In this regard, there is a related question raised by Gersten of whether  there is a constant $C\geq 0$ such that $ FV_{X, \Z}(k) \leq C\cdot FV_{X, \Q}(k)$, see~\cite[Section 4, open question]{GerstenCohomology} and~\cite[Introduction]{Ge99}. A positive answer to Gersten's question would imply that in Theorem~\ref{thm:answer}, for the rational case, only  the assumptions that $X$ is simply-connected and $FV_{X, \Q}(k)<\infty$ are sufficient to conclude fineness.
 
We provide a proof of the following  converse of Theorem~\ref{thm:answer}\eqref{answer2} in the class of cocompact $G$-spaces. For a definition of hyperbolic graph we refer the reader to~\cite{Bo12, BrHa99}. We shall say that a complex $X$ is \emph{$1$-acyclic} if it is connected and has trivial first homology group over the integers.  

\begin{theorem}  \label{thm:forfuture}
Let $G$ be a group. Let $Y$ be a $1$-acyclic cocompact $G$-complex with fine and hyperbolic $1$-skeleton and finite $G$-stabilizers of $1$-cells.  Then there is $C\geq 0$ such that  $FV_{Y, \Z}(k)\leq Ck$ for every $k$. In particular, $FV_{Y, \Q}(k) \leq Ck$ for every $k$.
\end{theorem}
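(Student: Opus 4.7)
The plan is to reduce the desired linear bound on $FV_{Y, \Z}$ to a filling bound for circuits in $Y^{(1)}$, then to use the $\delta$-hyperbolicity of $Y^{(1)}$ to split every circuit as a $\Z$-linear combination of linearly many short circuits, and finally to exploit fineness, cocompactness, and the finiteness of $1$-cell stabilizers to bound the $\|\cdot\|_\partial$-norm of each short circuit by a single constant. For the first reduction I would use the standard fact that every $1$-cycle $\gamma \in Z_1(Y, \Z)$ admits an Eulerian decomposition as a $\Z$-linear combination $\gamma = \sum_i c_i$ of oriented circuits in $Y^{(1)}$ with $\sum_i \length(c_i) = \|\gamma\|_1$: viewing $\gamma$ as an integer flow, the condition $\partial \gamma = 0$ is conservation at every vertex, so circuits can be peeled off iteratively. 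Since $\|\gamma\|_\partial \leq \sum_i \|c_i\|_\partial$, it suffices to prove a linear bound $\|c\|_\partial \leq C \cdot \length(c)$ for circuits $c$.

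For any fixed $L \geq 0$, cocompactness, finiteness of $1$-cell stabilizers, and fineness together imply that the set of $G$-orbits of circuits in $Y^{(1)}$ of length at most $L$ is finite: each such circuit passes through a translate of one of the finitely many orbit representatives of $1$-cells, and fineness bounds the circuits of length $\leq L$ through any fixed edge. Since $Y$ is $1$-acyclic, each such circuit bounds a $2$-chain in $Y$; fixing one such filling per orbit and letting $M_L$ denote the largest $\ell_1$-norm appearing gives $\|c\|_\partial \leq M_L$ for every circuit $c$ with $\length(c) \leq L$.

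I would then choose $L$ sufficiently large (in terms of the hyperbolicity constant of $Y^{(1)}$) and form the auxiliary $2$-complex $Y'$ obtained from $Y^{(1)}$ by attaching one $2$-cell along each circuit of length $\leq L$. Hyperbolicity of $Y^{(1)}$ makes $Y'$ simply connected with a linear combinatorial Dehn function, so there is a constant $K$ such that every circuit $c$ of length $n$ in $Y^{(1)}$ bounds a van Kampen diagram in $Y'$ of area at most $K n$. Reading the $2$-cells of such a diagram as a chain in $C_2(Y', \Z)$ and taking boundary produces a chain-level equation $c = \sum_{i=1}^{m} \alpha_i$ in $C_1(Y^{(1)}, \Z)$ with $m \leq K n$ and each $\alpha_i$ a circuit of length $\leq L$. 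Combined with the previous paragraph this gives
\[\|c\|_\partial \leq \sum_{i=1}^{m} \|\alpha_i\|_\partial \leq K n M_L,\]
and the Eulerian reduction then yields $FV_{Y, \Z}(k) \leq K M_L \cdot k$ for all $k$.

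The main obstacle is the hyperbolicity step: establishing that $Y'$ has linear Dehn function, i.e., that long circuits in a fine hyperbolic graph split into linearly many short circuits. This is morally Bowditch's thin-triangle argument for fine hyperbolic graphs, but it has to be run so that the resulting van Kampen diagram yields a bona fide equation of $\Z$-chains in $C_1(Y^{(1)}, \Z)$ (rather than merely a topological nullhomotopy), which is the delicate point in the present homological framework; once this is in hand, the remaining steps are formal.
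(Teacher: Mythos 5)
Your proposal is correct and is essentially the paper's own argument: both form the auxiliary complex $\Omega_L$ on the hyperbolic $1$-skeleton, invoke Bowditch's result that it is simply connected with a linear isoperimetric function, convert van Kampen diagrams into $2$-chain equations, and then fill each short circuit in $Y$ by a uniformly bounded $2$-chain using fineness, cocompactness and $1$-acyclicity (the paper packages this last step as the inequality $FV_{Y,\Z}(k)\leq FV_{Y,\Z}(n)\cdot FV_{X,\Z}(k)$ after noting that $Y$ and $X=\Omega_n$ share a $1$-skeleton, which is your constant $M_L$ in disguise). The ``delicate point'' you flag is handled exactly as you suggest and is not actually delicate: a van Kampen diagram $D$ pushes forward to a $2$-chain whose boundary is the $1$-cycle of the boundary path, which is the standard fact already used in the paper's Proposition relating isoperimetric functions to $FV_{X,\Z}$.
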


There are results implying hyperbolicity with assumptions in terms of homological linear isoperimetric inequalities over $\Q$ or $\R$. These  are more subtle results.  In the case that $X$ is the universal cover of a $K(G, 1)$ with finite $2$-skeleton and $FV_{X, \K}$ is linearly bounded, Gersten proved that $FV_{X, \Z}$ is also linearly bounded using constructions by Papasoglu and Ol'shanskii, see~\cite[Theorems 5.1 and 5.7]{GerstenCohomology} and the references there in. This argument was revisited by Mineyev in~\cite[Theorem 7]{Mi02}.  Groves and Manning remarked that these arguments do not rely in the complex $X$ being locally finite, and observed that the following result holds. 

\begin{theorem}\label{prop:hyperbolicity}\cite[Theorem 2.30]{GrMa09}
Let $X$ be a simply-connected complex such  there is a  bound on the  length of attaching maps of $2$-cells.
If $FV_{X, \Q}$ is bounded by a linear function  then the $1$-skeleton of $X$ is a hyperbolic graph. 
\end{theorem}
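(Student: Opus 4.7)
The plan is to reduce to Papasoglu's classical characterization: a simply connected $2$-complex whose $2$-cells have uniformly bounded boundary length and whose combinatorial Dehn function is linear has a $\delta$-hyperbolic $1$-skeleton. Let $L$ denote the uniform bound on the length of attaching maps of the $2$-cells of $X$.

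The key step, and the main obstacle, is to upgrade the rational homological isoperimetric inequality to a linear combinatorial isoperimetric inequality for edge loops. Given an edge loop $\gamma$ in $X^{(1)}$ of length at most $n$, viewed as an integral $1$-cycle, the hypothesis supplies a rational $2$-chain $\mu = \sum q_\sigma \sigma$ with $\partial \mu = \gamma$ and $\sum |q_\sigma| \leq Cn$. I would then clear the rational coefficients by a Gersten--Ol'shanskii--Papasoglu surgery: at each edge of $X^{(1)}$ along which $\mu$ has non-integral multiplicity, a local configuration of $2$-cells incident to that edge is rearranged into an integral one, paying a cost that is controlled by $L$ and by the $\ell_1$-norm of $\mu$ near that edge. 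Summing over edges, the total cost is $O(n)$, and the output can be organized into a van Kampen disk diagram $D \to X$ for $\gamma$ of area $O(n)$.

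The reason this step is the main obstacle is that the argument in the literature is typically phrased for the universal cover of a compact $K(G,1)$, hence under a local finiteness hypothesis on $X$; one must verify, as Groves and Manning do, that every move involved in the clearing procedure is strictly local in $X$ and touches only finitely many cells at a time, so that the absence of local finiteness is harmless. Gersten's presentation in \cite{GerstenCohomology} and Mineyev's revisiting in \cite{Mi02} are well suited for this local reformulation, since the constants depend only on $L$ and on the linear constant $C$.

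Once this combinatorial linear isoperimetric inequality is in hand, the conclusion follows from Papasoglu's theorem applied to $X$: a simply connected $2$-complex with $2$-cells of length at most $L$ and linear combinatorial Dehn function has hyperbolic $1$-skeleton. Applying this to $X$ yields that $X^{(1)}$ is a hyperbolic graph, as desired.
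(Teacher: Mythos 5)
The paper itself offers no proof of this statement: it is quoted verbatim from Groves and Manning, and the surrounding discussion records what the intended argument is. In that argument (Gersten, revisited by Mineyev, with Groves and Manning's observation that local finiteness is never used), hyperbolicity of $X^{(1)}$ is derived \emph{directly} from the linear rational filling inequality, by showing that geodesic bigons in $X^{(1)}$ are uniformly thin and invoking Papasoglu's bigon criterion; the linear integral and combinatorial isoperimetric inequalities are then recovered as \emph{consequences} of hyperbolicity, exactly as this paper does in the proof of Theorem~\ref{thm:answer}\eqref{answer2}. Your proposal reverses that order, and the step you yourself flag as ``the main obstacle'' is a genuine gap: there is no ``Gersten--Ol'shanskii--Papasoglu surgery'' in the cited literature that clears the denominators of a rational $2$-chain edge by edge at linearly controlled cost. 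The contribution of Ol'shanskii and Papasoglu here is the subquadratic/thin-bigon criterion for hyperbolicity, not a denominator-clearing procedure. A rational filling of small $\ell_1$-norm can be supported on an enormous collection of $2$-cells carrying tiny coefficients whose boundaries exhibit massive cancellation, and no rearrangement local to an edge produces an integral filling of comparable norm; whether $FV_{X,\Z}$ is linearly bounded in terms of $FV_{X,\Q}$ is precisely Gersten's open question highlighted in the introduction of this paper. (In the linear regime the comparison does hold, but the only known proof obtains hyperbolicity first.)

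There is a second, smaller gap: even granting a linear bound on integral homological fillings, converting an integral $2$-chain $\mu$ with $\partial\mu=\gamma$ into a van Kampen disk diagram for $\gamma$ of comparable area is not automatic (homological and homotopical Dehn functions differ in general), so the appeal to the homotopical form of Papasoglu's theorem needs justification. That particular issue could be sidestepped by using Gersten's homological hyperbolicity criterion --- condition $FZ_N$ plus a weak linear isoperimetric inequality, Theorem~\ref{thm:weak-isop} above --- as this paper does over $\Z$ in Corollary~\ref{cor:suskey}; but the rational-to-integral upgrade would still be unproved. The correct repair is to abandon the upgrade altogether and run the thin-bigon argument directly with rational chains.
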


The type of homological functions of Definition~\ref{def:FVX} have been considered in the contexts of  relatively hyperbolic groups for example in~\cite{GrMa09, MP15, MiYa}. Combining  Theorems~\ref{thm:answer},~\ref{thm:forfuture} and~\ref{prop:hyperbolicity} allows us to provide a characterization of  relatively hyperbolic groups in terms of homological Dehn functions stated as   Theorem~\ref{thm:chrhyp} below.  This characterization  resembles the approach to relative hyperbolicity by Osin in terms of a relative Dehn function~\cite{Os06},  strengthens the homological characterization by Groves and Manning~\cite[Theorem 3.25]{GrMa09}, and  extends a characterization of hyperbolic groups by Gersten~\cite[Theorem 3.1]{Ge96}.

We use the definition of relatively hyperbolic groups by Bowditch in terms of cocompact actions on fine graphs~\cite{Bo12}. This approach is equivalent to the well known definitions by Gromov~\cite{Gr87} and Osin~\cite{Os06}  when one restricts to the class of finitely generated groups, see~\cite{HK08, Os06}. 

\begin{definition}[Relatively hyperbolic group]\label{def:BowRH}\cite{Bo12}
A group $G$ is \emph{hyperbolic relative to a finite collection of subgroups $\mc P$} if
$G$ acts   on a connected, fine, $\delta$-hyperbolic graph $\Gamma$ with finite edge stabilizers, finitely many orbits of edges,  and $\mc P$ is a set of representatives of distinct conjugacy classes of vertex stabilizers (such that each infinite stabilizer is represented).   A $G$-graph $\Gamma$ with all these properties is called a $(G, \mc P)$-graph.  
\end{definition}

We remark that in Definition~\ref{def:BowRH} the group $G$ is not assumed to be finitely generated, and there are no assumptions on the subgroups in  $\mc P$.  Recall that a complex $X$ is \emph{$1$-acyclic} if it is connected and has trivial first homology group over the integers.  
 
\begin{theorem}[Relative hyperbolicity characterization]\label{thm:chrhyp}
Let $G$ be a group and let $\mc P$ be a finite collection of subgroups.  Then $G$ is hyperbolic relative to $\mc P$ if and only if there is an $1$-acyclic $G$-complex $X$ such that
\begin{enumerate}
\item the $G$-action on $X$ is cocompact,
\item there is $C\geq 0$ such that  $FV_{X, \Z}(k)\leq Ck$ for every $k$., 
\item the $G$-stabilizers of $1$-cells of $X$ are finite, and 
\item $\mc P$ is a collection of representatives of conjugacy classes of $G$-stabilizers of $0$-cells  such that each infinite stabilizer is represented. 
\end{enumerate}
\end{theorem}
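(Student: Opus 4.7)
\bigskip

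\noindent\emph{Proof plan.} The strategy is to combine Theorems~\ref{thm:answer}, \ref{thm:forfuture}, and~\ref{prop:hyperbolicity}.

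For the forward implication, I would start with a $(G,\mc P)$-graph $\Gamma$ and construct the desired $X$ by attaching to $\Gamma$ a $2$-cell along every combinatorial circuit of length at most $L$, for some $L$ chosen in terms of the hyperbolicity constant of $\Gamma$. The standard decomposition of loops in a hyperbolic graph into pieces of bounded length implies that, for $L$ large enough, the resulting complex $X$ is simply-connected, hence $1$-acyclic. Fineness of $\Gamma$ together with the finiteness of the number of $G$-orbits of edges of $\Gamma$ ensures that there are only finitely many $G$-orbits of circuits of length at most $L$, so the $G$-action on $X$ is cocompact. The $G$-stabilizers of $1$-cells are inherited from $\Gamma$ and are therefore finite, and the conjugacy classes of $0$-cell stabilizers are represented by $\mc P$. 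With $X$ thus exhibited as a $1$-acyclic cocompact $G$-complex with fine, hyperbolic $1$-skeleton and finite edge stabilizers, Theorem~\ref{thm:forfuture} yields the required constant $C$ with $FV_{X,\Z}(k)\leq Ck$.

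For the converse, let $X$ satisfy conditions (1)--(4) and set $\Gamma := X^{(1)}$. I would first verify that each $1$-cell of $X$ is adjacent to only finitely many $2$-cells: cocompactness gives finitely many $G$-orbits of $2$-cells, and for a representative $d$ of one such orbit, the count of translates $gd$ containing a fixed $1$-cell $e$ is controlled by a finite sum, over the $1$-cells $e_i$ in $\partial d$, of the indices $[\mathrm{Stab}(e_i):\mathrm{Stab}(d)\cap\mathrm{Stab}(e_i)]$, each finite because $\mathrm{Stab}(e_i)$ is finite. Theorem~\ref{thm:answer}(1) then gives that $\Gamma$ is fine.

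To conclude hyperbolicity of $\Gamma$, the plan is to enlarge $X$ to a simply-connected cocompact $G$-complex $\widehat X$ with $\widehat X^{(1)}=\Gamma$, uniformly bounded attaching maps, and a linear homological isoperimetric inequality over $\Q$, and then invoke Theorem~\ref{prop:hyperbolicity}. The construction of $\widehat X$ proceeds by attaching to $X$ a further $G$-cocompact collection of $2$-cells along loops of uniformly bounded length in $\Gamma$ whose $G$-translates normally generate $\pi_1(X)$; fineness of $\Gamma$ is what makes a cocompact family of such short loops available. Since $X\subseteq \widehat X$, the bound $FV_{\widehat X,\Q}(k)\leq FV_{X,\Q}(k)\leq FV_{X,\Z}(k)\leq Ck$ is automatic, so Theorem~\ref{prop:hyperbolicity} applies to $\widehat X$ and yields hyperbolicity of $\Gamma$. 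Combined with fineness, finite edge stabilizers, finitely many edge orbits (from cocompactness of $X$), and the prescribed vertex-stabilizer structure, $\Gamma$ is a $(G,\mc P)$-graph, and therefore $G$ is hyperbolic relative to $\mc P$.

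The main obstacle is the cocompact simply-connected extension $\widehat X$: one must show that $\pi_1(X)$ admits a $G$-cocompact normal generating family consisting of loops of bounded length in $\Gamma$. Although $\pi_1(X)$ need not be finitely generated in the usual sense, the cocompactness of the $G$-action on $X$ together with the fineness of $\Gamma$ give enough structure to produce such a family; executing this step carefully, so that the additional $2$-cells preserve cocompactness and the bound on attaching lengths, is the technical heart of the converse and is where the linear (rather than merely finite) hypothesis on $FV_{X,\Z}$ is essential, since Theorem~\ref{prop:hyperbolicity} requires a linear rational isoperimetric bound.
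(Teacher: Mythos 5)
Your forward direction is essentially the paper's: the paper simply invokes Lemma~\ref{lem:omegan2}, which builds $\Omega_n(\Gamma)$ exactly as you describe (your detour through Theorem~\ref{thm:forfuture}, rather than through Proposition~\ref{lem:simply-connected-complex} and Proposition~\ref{prop:finiteFV}, is harmless). Your fineness argument in the converse also matches the paper, which uses Proposition~\ref{prop:fine-crit} via Corollary~\ref{cor:suskey}.

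The gap is in the hyperbolicity step of the converse. You route through Theorem~\ref{prop:hyperbolicity}, which requires a \emph{simply-connected} complex, and so you must manufacture a cocompact simply-connected extension $\widehat X$ of $X$ with uniformly bounded attaching maps. But the hypothesis on $X$ is only that $H_1(X,\Z)$ vanishes, and a linear bound on $FV_{X,\Z}$ is purely homological: neither gives any control over $\pi_1(X)$, which could a priori be a nontrivial perfect group. Your assertion that ``cocompactness together with fineness give enough structure to produce'' a $G$-cocompact family of bounded-length loops normally generating $\pi_1(X)$ is exactly the unproved point, and the only natural source of such a family --- simple connectivity of $\Omega_n(\Gamma)$ via Proposition~\ref{lem:simply-connected-complex} --- presupposes that $\Gamma$ is hyperbolic, which is what you are trying to prove; as written the step is circular. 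The paper sidesteps this entirely: Corollary~\ref{cor:suskey} deduces hyperbolicity from Gersten's homological criterion (condition $FZ_N$ together with a weak linear isoperimetric inequality, Theorem~\ref{thm:weak-isop}), which operates at the level of $H_1$ and needs only $1$-acyclicity. Concretely: after a barycentric subdivision the boundaries of $2$-cells are circuits of length at most some $N$; a minimal filling $\beta$ of a circuit $\gamma$ exhibits $[\gamma]=\sum_i \epsilon_i[\gamma_i]$ with each $\gamma_i$ of length at most $N$ and at most $C\|\gamma\|_1$ terms; hence $\Gamma$ satisfies $FZ_N$ and a weak linear isoperimetric inequality, so it is hyperbolic. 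Replacing your $\widehat X$ construction by this argument closes the gap; the remaining bookkeeping (finitely many edge orbits, finite edge stabilizers, the vertex-stabilizer condition) is as you state.
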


A complex studied in the context of relatively hyperbolic groups is the coned-off Cayley complex $\widehat C$ of a finite presentation of $G$ relative to a collection of finitely generated subgroups $\mc P$, for a definition of this complex see~\cite[Definition 2.47]{GrMa09} or the last section of this note. The statement of Theorem~\ref{thm:chrhyp} replacing $X$ by the coned-off Cayley complex is a homological characterization of relative hyperbolicity by Groves and Manning~\cite[Theorem 3.25]{GrMa09}. Finding a more direct proof of this characterization was one of the motivations of Question~\ref{question}.  A  precise statement  of this  characterization together with a discussion of its proof is in Section~\ref{sec:3.3}.  

The rest of the article is organized in two parts.  The first section contains results on the relation between fine graphs and homological Dehn functions and, in particular,  
the proof of  Theorem~\ref{prop:main}. The second section contains the proofs of Theorems~\ref{thm:answer}, ~\ref{thm:forfuture} and~\ref{thm:chrhyp}; this part concludes with a discussion of coned-off Cayley complexes.

\section{Fine graphs and The homological Dehn function over $\Z$}

Through this article, when considering a complex $X$,   we assume that for each cell of positive dimension an orientation has been chosen once and for all.  
As usual,  the group of $n$-cycles $C_n(X, \Z)$  is understood as the free abelian group with free basis the collection of $n$-cells with their chosen orientation. These chosen orientations are necessary in order to define the boundary maps.   

In this section, we only consider homological Dehn functions over $\Z$, so through all the section  $FV_{X}$ and $\|\cdot\|_\partial$ shall denote $FV_{X, \Z}$ and $\|\cdot \|_{\partial, \Z}$. For statements of results we use the standard notation. 

\subsection{Proof of Theorem~\ref{prop:main}}

\begin{proposition}\label{prop:fine-crit}
Let $X$ be a  complex such that $FV_{X, \Z}(k)<\infty$ for every integer $k$, and each $1$-cell of $X$ is adjacent to finitely many $2$-cells.  Then the $1$-skeleton of $X$ is a fine graph.
\end{proposition}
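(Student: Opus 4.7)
The plan is to exhibit, for every edge $e$ and every integer $L$, a finite subcomplex of $X$ containing every circuit of length at most $L$ through $e$; since a finite graph has only finitely many circuits of any bounded length, this yields fineness.

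Set $M = FV_{X,\Z}(L)$, which is finite by hypothesis, and let $c$ be such a circuit, regarded as a $1$-cycle over $\Z$ of norm at most $L$. Pick a filling $\mu \in C_2(X,\Z)$ with $\partial \mu = c$ and $\|\mu\|_1 \leq M$. The first step is to reduce to a filling whose support is connected with respect to the \emph{edge-adjacency relation} on $2$-cells (two $2$-cells being related when they share a $1$-cell). If the support of $\mu$ splits as $S_1\sqcup\cdots\sqcup S_r$ into connected components, then no edge can lie in the boundary of cells in two different $S_i$'s (since such cells would be edge-adjacent), so the restrictions $\mu_i$ of $\mu$ to each component have pairwise edge-disjoint boundaries and $\partial\mu = \sum_i \partial\mu_i$ is a sum of chains with disjoint $1$-cell supports. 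Because $c$ is connected as a subgraph, all but one $\partial\mu_i$ vanishes, and $\mu$ is replaced by the surviving summand, which still satisfies $\partial\mu = c$ and $\|\mu\|_1 \leq M$.

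Next, build an increasing sequence $\mc G_1 \subseteq \mc G_2 \subseteq \cdots$ of finite sets of $2$-cells depending only on $e$: let $\mc G_1$ be the (finite, by hypothesis) set of $2$-cells containing $e$ in their boundary, and let $\mc G_{n+1}$ consist of $\mc G_n$ together with every $2$-cell sharing a $1$-cell with some member of $\mc G_n$. Inductively $\mc G_n$ is finite, since each $2$-cell has a finite attaching map and hence finitely many boundary $1$-cells, and each $1$-cell is adjacent to only finitely many $2$-cells by hypothesis. Because the support of the reduced filling $\mu$ is connected and has at most $M$ cells, and because $e$ has nonzero coefficient in $\partial \mu = c$ (so some cell of the support already lies in $\mc G_1$), every cell of the support must lie in $\mc G_M$. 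Consequently the edges of $c$, which are exactly the $1$-cells with nonzero coefficient in $\partial \mu$, all lie in the finite set of boundary edges of cells of $\mc G_M$. This finite edge set determines a finite subgraph of the $1$-skeleton of $X$ containing every circuit of length at most $L$ through $e$, and the conclusion follows.

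The main obstacle is the first step: the reduction to a connected-support filling. Without it, $\mu$ could contain an arbitrary $2$-cycle supported far from $e$, and the argument bounding $\mathrm{supp}(\mu)$ inside the ``combinatorial ball'' $\mc G_M$ would fail. Observing that disconnected components of the support contribute edge-disjoint boundaries is precisely what lets one discard extraneous $2$-cycles while preserving the filling property.
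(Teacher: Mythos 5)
Your argument is correct, and it organizes the proof differently from the paper. The paper shows that a \emph{minimal}-norm filling of a circuit through $e$ can be ordered into a ``special $2$-chain'': a sequence $f_1,\dots,f_n$ with $\partial f_1$ meeting $e$ and each $\partial f_{k+1}$ meeting the boundary of the preceding partial sum as $1$-chains; it then counts that there are only finitely many special $2$-chains based at $e$ of bounded norm, and each circuit is the boundary of one of them. You instead take an \emph{arbitrary} filling of norm at most $M=FV_{X,\Z}(L)$, prune it to the single edge-adjacency component whose boundary is the circuit, and trap that component inside the finite combinatorial ball $\mathcal G_M$, so that every circuit of length at most $L$ through $e$ lives in one fixed finite subgraph depending only on $e$ and $L$. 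Both proofs pivot on the same key fact (the paper's Lemma~\ref{lem:key-observation}, that a circuit cycle cannot split as a sum of disjoint nonzero cycles): the paper uses it, via minimality, to rule out a disjoint partition of the filling, while you use it to kill all but one component of the pruned filling. What your version buys is the elimination of the minimal-area hypothesis and of the inductive reordering step, at the price of working with the coarser relation ``share a boundary $1$-cell'' rather than ``boundaries not disjoint as chains,'' so your ball $\mathcal G_M$ is potentially larger than the set of cells reachable by special chains; this costs nothing since only finiteness is needed. The small points one should make sure are explicit --- that $e$ has nonzero coefficient in $c$ forces some cell of the surviving component into $\mathcal G_1$, and that each $2$-cell has only finitely many boundary $1$-cells because attaching maps are combinatorial --- are both present in your write-up, so I see no gap.
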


For the proof of Proposition~\ref{prop:fine-crit}, we  introduce the notions of  \emph{disjoint $1$-chain} and \emph{special $2$-chain}. 

\begin{definition}[Disjoint chains]
Let $X$ be a complex and consider the free abelian group of chains $C_1(X, \Z)$ with basis   the collection of $1$-cells of $X$.  Two  $1$-chains $\alpha, \beta \in C_1(X, \Z)$ are \emph{disjoint} if,  when considering their unique expressions as linear combinations in the basis,  there is no element of the basis having non-zero coefficients in both expressions.   	
\end{definition}

\begin{lemma}\label{lem:key-observation}
Let $\gamma \in Z_1(X, \Z)$ be a cellular $1$-cycle induced by a circuit in the $1$-skeleton of $X$.
If $\gamma = \alpha+\beta$ where $\alpha, \beta \in Z_1(X, \Z)$ are disjoint  then either $\alpha$ or $\beta$ is  trivial.
\end{lemma}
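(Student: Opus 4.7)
The plan is to exploit the defining structural feature of a circuit: each vertex of a simple closed combinatorial path is incident to exactly two edges of the path. First I would record that because $\gamma$ is induced by a circuit, the support of $\gamma$ is exactly the edge set of the circuit and every edge in this support appears in $\gamma$ with coefficient $\pm 1$. Since $\alpha$ and $\beta$ are disjoint and $\alpha+\beta=\gamma$, the support of $\gamma$ is then the disjoint union of the supports of $\alpha$ and $\beta$, and on each edge of its own support each of $\alpha,\beta$ carries the coefficient $\pm 1$ inherited from $\gamma$.

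The key step is local. I would fix a vertex $v$ of the circuit and consider the contributions to $\partial\alpha$ at $v$. The only edges in the support of $\alpha$ that are incident to $v$ are among the two circuit-edges $e,e'$ meeting at $v$. From $\partial\gamma=0$ at $v$, the contributions of $e$ and $e'$ to $\partial\gamma$ at $v$ already cancel each other, and each is $\pm 1$. Consequently, if exactly one of $e,e'$ lay in the support of $\alpha$, the vertex $v$ would receive a non-zero contribution to $\partial\alpha$, contradicting $\alpha\in Z_1(X,\Z)$. Thus at every vertex of the circuit, both incident circuit-edges lie in the support of $\alpha$, or both lie in the support of $\beta$.

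Finally, I would propagate this dichotomy around the circuit. Since consecutive edges of the circuit share a vertex, the preceding observation forces all edges of the circuit to belong to a single one of the two supports. Hence either the support of $\alpha$ is the whole edge set of the circuit, in which case $\beta=0$, or the support of $\beta$ is the whole edge set of the circuit, in which case $\alpha=0$. The degenerate cases of a circuit of length one (a single loop) or length two (a digon) are handled directly from the disjointness of supports.

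I do not anticipate a real obstacle; the only point requiring mild care is the sign bookkeeping that shows a solitary circuit-edge at $v$ cannot produce a vanishing boundary contribution on its own, which is immediate once one uses that $\gamma$ already has $\partial\gamma=0$ at $v$.
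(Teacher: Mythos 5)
Your proof is correct and takes essentially the same route as the paper's: the paper's one-line observation that any proper, non-trivial sub-chain of a circuit has non-zero boundary is exactly what your vertex-by-vertex cancellation argument establishes, with the propagation around the circuit supplying the connectedness that the paper leaves implicit. No gap.
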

\begin{proof}
Let $\{e_i\}_{i\in I}$ be the collection of $1$-cells of $X$. Suppose that $\gamma = \sum_{i\in I} c_i  e_i$. Since $\gamma$ is induced by a circuit, observe that for every proper subset $J \subsetneq I$ we have that either $\sum_{i\in J} c_i  e_i=\gamma$, or  $\sum_{i\in J} c_i  e_i = 0$, or $\partial \left( \sum_{i\in J} c_i  e_i \right) \neq 0$. Therefore, if $\gamma = \alpha+\beta$ where $\alpha, \beta \in Z_1(X, \Z)$ are disjoint, then either $\alpha=0$ or $\beta=0$.
\end{proof}

\begin{definition}[Special $2$-chain]
Let $X$ be a complex and consider $C_1(X, \Z)$ and $C_2(X, \Z)$ with their free $\Z$-bases corresponding to the collections of  $1$-cells and $2$-cells of $X$ respectively.   Let $e$ be a $1$-cell of $X$.  A \emph{special $2$-chain based at $e$} is a $2$-chain $\mu$ such that there is a sequence  $f_1,  \ldots ,   f_n$ of elements of the basis of $C_2(X, \Z)$ such that 
 $\mu = \sum_{i =1}^n \epsilon_i  f_i$ where  $\epsilon_i= \pm 1$ and 
\begin{enumerate}
\item $\|\mu\|_{1}=n$,
\item the $1$-chains $e$ and $\partial f_1$ are not disjoint, and 
\item for every $k<n$ the $1$-cycles $\partial \sum_{i=1}^k \epsilon_i f_i$ and $\partial f_{k+1}$ are not disjoint. 
\end{enumerate}
 \end{definition}

\begin{remark}
If  $\mu = \sum_{i =1}^n \epsilon_i  f_i$ is a special $2$-chain of $X$ based at $e$, then for every $k\leq n$, the chain $\sum_{i =1}^k \epsilon_i   f_i$ is special.
\end{remark}

\begin{proof}[Proof of Proposition~\ref{prop:fine-crit}] 
Consider $C_1(X, \Z)$ and $C_2(X, \Z)$ with their free $\Z$-bases corresponding to the collections of  $1$-cells and $2$-cells of $X$ respectively.
  We will show that for any  $1$-cell $e$,  any circuit $\gamma$ containing $e$ is (as a $1$-cycle) the boundary of a special $2$-chain $\mu$ based at $e$ such that $\|\mu\|_1\leq FV_X (\|\gamma\|_1)$, this is Claim 1 below. Since $FV_X$ is finite-valued,  it follows  that it is enough to prove that for each positive integer $n$ and each $1$-cell $e$ of $X$, there are finitely many special $2$-chains based at $e$ with $\ell_1$-norm bounded from above by $n$, this is Claim 2 below.

\emph{Claim 1, minimal area fillings are special.} Let $\gamma$ be a $1$-cycle induced by a circuit in the $1$-skeleton of $X$ containing $e$, and let  $\mu$ be a $2$-chain such that  $\partial \mu=\gamma$ and $\|\mu\|_1=\|\gamma\|_\partial $. Then $\mu$ is a special $2$-chain  based at $e$, and in particular $\|\mu\|_1\leq FV_X(\|\gamma\|_1)$.

Indeed, we have a unique expression $\mu = \sum_{i \in I} \epsilon_i   f_i$ where each $f_i$  is an element of the basis of $C_2(X, \Z)$,  $\epsilon_i= \pm 1$, and $\|\mu\|_1$ equals the cardinality of $I$.  Consider a non-empty proper subset of $J\subsetneq I$ and consider the $1$-cycles $\alpha=\partial \left( \sum_{i\in J} \epsilon_i   f_i \right)$ and $\beta = \partial \left ( \sum_{i\in I\setminus J} \epsilon_i    f_i \right )$.  Since $\|\mu\|_1=\|\gamma\|_\partial$, we have that $\alpha$ and $\beta$ are non-zero cycles. Since $\gamma=\alpha+\beta$ is a $1$-cycle induced by a circuit, Lemma~\ref{lem:key-observation} implies that  $\alpha$ and $\beta$ are not disjoint.  An induction argument then shows that we can order $I=\{1,\cdots, n\}$ so that $\mu = \sum_{i =1}^n \epsilon_i  f_i$, the $1$-chains $e$ and $\partial  f_1$ are not disjoint, and  for every $k<n$ the $1$-cycles $\sum_{i=1}^k \epsilon_i   f_i$ and $\partial   f_{k+1}$ are not disjoint. 

\emph{Claim 2.} Let $n$ be a positive integer and let $e$ be a $1$-cell of $X$.  Then there are finitely many special $2$-chains based at $e$ with $\ell_1$-norm equal $n$

Now we use the hypothesis that  each $1$-cell of $X$ is adjacent to finitely many $2$-cells.  Let $\sum_{i =1}^n \epsilon_i   f_i$ be a special $2$-chain based at $e$.  By the hypothesis, there are finitely many choices for $f_1$. Once we have chosen $\sum_{i =1}^k \epsilon_i  f_i$ special based at $e$, since $\partial \left ( \sum_{i =1}^k \epsilon_i   f_i \right)$ and $\partial   f_{k+1}$ are not disjoint, the hypothesis implies that there are finitely many choices for $f_{k+1}$.  
\end{proof}

\begin{proposition}\label{lem:criterionFV}
Let $X$ be a cocompact $G$-complex with trivial first homology and fine $1$-skeleton. Then $FV_{X, \Z}(k)<\infty$ for every integer $k$.  
\end{proposition}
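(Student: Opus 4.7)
The plan is to show that for each $k$, every $\gamma \in Z_1(X, \Z)$ with $\|\gamma\|_1 \le k$ admits a uniform bound on $\|\gamma\|_\partial$. The strategy has two parts: decompose $\gamma$ into circuit-cycles, and then use fineness together with cocompactness to bound $\|\cdot\|_\partial$ on circuits of bounded length.

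First I would establish the following decomposition lemma: any $\gamma \in Z_1(X, \Z)$ can be written as $\gamma = \sum_{i=1}^N [\sigma_i]$ in $C_1(X, \Z)$, where each $\sigma_i$ is an oriented circuit in the $1$-skeleton of $X$ and $\sum_{i=1}^N \length(\sigma_i) = \|\gamma\|_1$. The construction is an Eulerian-type decomposition: form the directed multigraph $M$ on the vertex set of $X^{(1)}$ whose edges are $|c_e|$ copies of each $1$-cell $e$ of $X$, oriented in the chosen orientation of $e$ if $c_e>0$ and reversed if $c_e<0$. The condition $\partial \gamma = 0$ translates into balanced in-/out-degrees at every vertex of $M$, so by repeatedly extracting simple directed cycles one obtains an edge-disjoint decomposition of $M$ into simple directed cycles. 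Because all copies of a given $e$ in $M$ share a common orientation, no simple directed cycle in $M$ uses two copies of the same underlying $1$-cell; hence each such cycle projects faithfully to a circuit $\sigma_i$ in $X^{(1)}$, and summing over $i$ recovers the coefficients of $\gamma$.

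Next I would use fineness and cocompactness to bound $\|[\sigma]\|_\partial$ uniformly over circuits $\sigma$ of length at most $k$. Cocompactness gives finitely many $G$-orbits of $1$-cells, with representatives $e_1, \ldots, e_m$. Fineness of the $1$-skeleton implies that the set $\mc{C}_k$ of circuits of length at most $k$ passing through some $e_j$ is finite. Triviality of $H_1(X, \Z)$ ensures each $[\tau]$ with $\tau \in \mc{C}_k$ is a boundary, so $\|[\tau]\|_\partial < \infty$, and we may set $M_k := \max_{\tau \in \mc{C}_k} \|[\tau]\|_\partial < \infty$. The $G$-action on $C_\bullet(X, \Z)$ preserves the $\ell_1$-norm and commutes with $\partial$, so $\|\cdot\|_\partial$ is $G$-invariant. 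Since any circuit $\sigma$ of length at most $k$ contains some edge $g \cdot e_j$, we have $g^{-1}\sigma \in \mc{C}_k$, giving $\|[\sigma]\|_\partial = \|[g^{-1}\sigma]\|_\partial \le M_k$.

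Combining the two steps, if $\|\gamma\|_1 \le k$ then the decomposition produces $N$ circuits with $\sum \length(\sigma_i) \le k$, so $N \le k$ and each $\sigma_i$ has length at most $k$; hence $\|\gamma\|_\partial \le \sum_i \|[\sigma_i]\|_\partial \le k M_k$, and thus $FV_{X, \Z}(k) \le k M_k < \infty$. The main obstacle I expect is the careful setup of the circuit decomposition, in particular verifying that simple directed cycles in the auxiliary multigraph $M$ project to bona fide circuits in $X^{(1)}$ with the correct oriented $1$-chain (using that all copies of a given $1$-cell in $M$ share a common orientation). Once the decomposition is in hand, the rest follows cleanly from fineness, cocompactness, $G$-equivariance of $\|\cdot\|_\partial$, and the vanishing of $H_1$.
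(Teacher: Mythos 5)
Your proposal is correct and follows essentially the same route as the paper: decompose the cycle into circuit-cycles with additive $\ell_1$-norms (the paper cites Gersten's Lemma A2 for this, where you supply the Eulerian argument yourself), then use fineness, cocompactness, $G$-invariance of $\|\cdot\|_\partial$, and vanishing of $H_1$ to bound the filling norm of bounded-length circuits uniformly. The only cosmetic difference is that the paper phrases the finiteness step as finitely many $G$-orbits of circuits of length at most $k$, whereas you phrase it via circuits through finitely many orbit representatives of edges; these are equivalent.
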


\begin{lemma}\cite[Lemma A2]{Ge98}\label{lem:circuitd}
Let $X$ be a complex. Any $1$-cycle $\gamma \in Z_1(X, \Z)$ can be expressed as a finite sum $\sum_{i} \alpha_i$ where each $\alpha_i$ is a $1$-cycle induced by a circuit and $\|\gamma\|_1 = \sum_{i} \|\alpha_i\|_1$.
\end{lemma}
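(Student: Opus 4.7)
The plan is to induct on $\|\gamma\|_1$, extracting at each step a single circuit-induced $1$-cycle $\alpha$ such that $\gamma-\alpha\in Z_1(X,\Z)$ and the $\ell_1$-norms subtract exactly: $\|\gamma-\alpha\|_1 = \|\gamma\|_1 - \|\alpha\|_1$. Since $\|\alpha\|_1\geq 1$ at every step, the recursion terminates after at most $\|\gamma\|_1$ iterations, and collecting the extracted cycles yields the asserted finite decomposition together with the additive norm identity.

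The construction of $\alpha$ proceeds via a signed-flow interpretation of the cycle. Write $\gamma=\sum_e c_e e$ and form an auxiliary directed multigraph $M_\gamma$ on the $0$-skeleton of $X$ in which each $1$-cell $e$ contributes $|c_e|$ parallel arcs, oriented in the chosen orientation of $e$ if $c_e>0$ and reversed if $c_e<0$. The cycle condition $\partial\gamma=0$ translates into the Kirchhoff balance $\mathrm{indeg}_{M_\gamma}(v)=\mathrm{outdeg}_{M_\gamma}(v)$ at every vertex $v$. Starting from any vertex in the support of $\gamma$ and walking forward along unused arcs, this balance guarantees that the walk is never forced to halt at an interior vertex, and since the support of $\gamma$ is finite some vertex must eventually be revisited. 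Truncating at the first such revisit extracts a closed directed subwalk $v_i,v_{i+1},\ldots,v_k=v_i$ whose interior vertices are otherwise distinct, and hence a simple circuit $c$ in the $1$-skeleton of $X$; indeed all arcs of $M_\gamma$ arising from a single $1$-cell share the same direction, so using a single $1$-cell twice inside the subwalk would force revisiting its tail and contradict vertex simplicity.

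Letting $\alpha=\sum_{e\in c}\varepsilon_e\, e$ with $\varepsilon_e=\pm 1$ recording whether the subwalk traverses $e$ in its chosen orientation or in reverse, one has $\varepsilon_e=\mathrm{sgn}(c_e)$ by the construction of $M_\gamma$, so subtracting $\alpha$ decreases $|c_e|$ by exactly one on each $e\in c$ and leaves all other coefficients unchanged. This yields $\gamma-\alpha\in Z_1(X,\Z)$ with $\|\gamma-\alpha\|_1=\|\gamma\|_1-\|\alpha\|_1$ and no cancellation, completing the inductive step. The only delicate point in the whole argument is precisely this sign-matching: the multigraph $M_\gamma$ is designed so that extraction of a simple circuit automatically yields a $1$-cycle whose coefficients align in sign with those of $\gamma$, which is what prevents the $\ell_1$-norms from partially cancelling upon subtraction.
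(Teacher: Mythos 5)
Your proof is correct. The paper gives no argument of its own for this lemma --- it is quoted directly from \cite[Lemma A2]{Ge98} --- and your flow-decomposition argument (building the balanced directed multigraph $M_\gamma$, extracting a vertex-simple closed subwalk at the first revisited vertex, and using the sign-alignment $\varepsilon_e=\mathrm{sgn}(c_e)$ to guarantee $\|\gamma-\alpha\|_1=\|\gamma\|_1-\|\alpha\|_1$) is essentially the standard proof underlying Gersten's statement, with the key cancellation-free step correctly identified and justified.
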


\begin{proof}[Proof of Proposition~\ref{lem:criterionFV}] 
 Since the $1$-skeleton of $X$ is fine graph and $G$ acts cocompactly, then for each positive integer $n$, the $G$-action on the collection of circuits in the $1$-skeleton of $X$ of length at most $n$ has finitely many orbits. 

Observe the induced actions of $G$ on the cellular chain groups $C_i(X)$ preserve the $\ell_1$-norm and commute with the boundary maps.  In particular, the norm $\|\cdot\|_\partial$ on $Z_1(X, \Z)$ induced by $C_2(X, \Z)$ is $G$-equivariant. 

Since $X$ has trivial first homology and has finitely many circuits of length at most $n$ in the $1$-skeleton up to the $G$-action, there exists a constant $B_n<\infty$ with the following property: $\|\alpha\|_\partial \leq B_n$ for every $1$-cycle $\alpha$ such that $\|\alpha\|_1\leq n$  and $\alpha$ is represented by a circuit of length at most $n$ in the $1$-skeleton of $X$.

Let $\gamma \in \Z_1(X)$ be a cellular $1$-cycle of $X$ such that $\|\gamma\|_1\leq n$. Invoke Lemma~\ref{lem:circuitd} to have an expression  $\gamma=\gamma_1+\gamma_2+\cdots+\gamma_k$ where each $\gamma_i$ is a $1$-cycle represented by a circuit and such that  $\|\gamma\|_1=\|\gamma_1\|_1+ \cdots + \|\gamma_k\|_1$ and  $k\leq n$. Observe that $\|\gamma\|_\partial \leq \|\gamma_1\|_\partial+ \cdots + \|\gamma_k\|_\partial$. It follows that $\|\gamma\|_\partial \leq nB_n$ and hence $FV_X(n) \leq n B_n$.
\end{proof}

\begin{proof}[Proof of Theorem~\ref{prop:main}]
Observe that if $X$ is a cocompact $G$-cell complex with finite stabilizers of $1$-cells, then each $1$-cell is adjacent to finitely many $2$-cells.  The result  follows  from Proposition~\ref{lem:criterionFV} and Proposition~\ref{prop:fine-crit}.  
\end{proof}

\subsection{Isoperimetric functions and $FV_{X, \Z}$}

We refer the reader to~\cite[Appendix: Combinatorial 2-Complexes]{BrHa99} for a discussion on van Kampen diagrams which are used below.

 \begin{definition}[Isoperimetric function]
A function $f\colon \N \to \N$ is an \emph{isoperimetric function} for a complex $X$ if it is monotonic non-decreasing and whenever $P$ is a closed edge path  of $X$, there is van Kampen diagram $D$ for $P$ with area bounded from above by $f(|P|)$, where $|P|$ denotes the combinatorial length of the path. 
\end{definition}  
 
 \begin{definition}[Superadditive closure]
 A function $f\colon \N \to \N$ is superadditive if $f(m)+f(n)\leq f(m+n)$ for every pair $m,n\in \N$. For an arbitrary function $g\colon \N \to \N$, let $\bar g$ denote the least function such that $g\leq \bar g$ and $\bar g$ is super-additive. Specifically, 
 \[\bar g(n) = \max \{f(n_1)+\cdots +f(n_k) \colon n_1+n_2+\cdots +n_k=n \},\]
 where the maximum is taken over all $k\leq n$ and all partitions $n_1+n_2+\cdots +n_k$ of $n$. We shall refer to $\bar g$ as the \emph{superadditive closure} of $g$.  
 \end{definition}

 \begin{remark}\label{rem:linearclosure}
If $f(n)=Cn$ then $\bar f (n) =Cn$.
\end{remark}
   
\begin{proposition}\cite[Proposition 2.4]{Ge99}\label{prop:finiteFV}
Let $X$ be a simply-connected complex admitting an isoperimetric function $f\colon \N \to \N$. Then $FV_{X, \Z}(n) \leq \bar f(n)$ for every $n\in \N$, where $\bar f$ is the superadditive closure of $f$.
\end{proposition}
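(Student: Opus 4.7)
The plan is to reduce the problem to fillings of single circuits via Lemma~\ref{lem:circuitd}, and then produce efficient fillings for circuits by exploiting the simply-connectedness of $X$ together with van Kampen diagrams supplied by the isoperimetric function $f$.

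Fix a $1$-cycle $\gamma\in Z_1(X,\Z)$ with $\|\gamma\|_1\leq n$.  Invoke Lemma~\ref{lem:circuitd} to write $\gamma=\sum_{i=1}^{k}\alpha_i$, where each $\alpha_i$ is the $1$-cycle induced by a circuit $P_i$ in the $1$-skeleton of $X$, and $\sum_{i=1}^{k}\|\alpha_i\|_1=\|\gamma\|_1$.  Because $P_i$ is a simple closed combinatorial path, its combinatorial length $|P_i|$ equals $\|\alpha_i\|_1$.

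Since $X$ is simply connected, each $P_i$ admits a van Kampen diagram $D_i$, and by hypothesis we may choose $D_i$ with $\area(D_i)\leq f(\|\alpha_i\|_1)$.  Pushing the $2$-cells of $D_i$ forward by the combinatorial map $D_i\to X$ and summing with the induced orientations produces a $2$-chain $\mu_i\in C_2(X,\Z)$ with $\partial\mu_i=\alpha_i$ and $\|\mu_i\|_1\leq \area(D_i)$ (possibly strictly less, due to cancellation when distinct $2$-cells of $D_i$ are identified in $X$).  Consequently $\|\alpha_i\|_\partial\leq f(\|\alpha_i\|_1)$.

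Setting $\mu=\sum_{i=1}^{k}\mu_i$ gives $\partial\mu=\gamma$ and
\[
\|\gamma\|_\partial \leq \sum_{i=1}^{k}\|\mu_i\|_1 \leq \sum_{i=1}^{k} f(\|\alpha_i\|_1) \leq \bar f\left(\sum_{i=1}^{k}\|\alpha_i\|_1\right) = \bar f(\|\gamma\|_1)\leq \bar f(n),
\]
where the third inequality is the defining property of the superadditive closure, and the last uses that $\bar f$ is monotonic nondecreasing (a consequence of $f$ being monotonic and taking values in $\N$).  There is no substantial obstacle; the only mild technical point is the standard identification between a van Kampen diagram of area $N$ and a $2$-chain filling with $\ell_1$-norm at most $N$, as discussed in~\cite[Appendix: Combinatorial 2-Complexes]{BrHa99}.
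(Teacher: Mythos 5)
Your proof is correct and follows essentially the same route as the paper's: decompose the cycle into circuits via Lemma~\ref{lem:circuitd}, fill each circuit with the $2$-chain induced by a van Kampen diagram of area at most $f(|P_i|)$, and sum, invoking the superadditive closure. The extra remarks on possible cancellation in the pushforward and on the monotonicity of $\bar f$ are accurate refinements of the same argument.
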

\begin{proof}
Let  $\gamma \in Z_1(X)$ be a $1$-cycle in $X$ such that $\|\gamma\|_1=n$. By Lemma~\ref{lem:circuitd}, there is an expression   $\gamma=\gamma_1+\cdots+\gamma_k$ where each $\gamma_i$ is a $1$-cycle represented by  a closed path $P_i$ such that  $\|\gamma\|_1=\|\gamma_1\|_1+ \cdots + \|\gamma_k\|_1$ and $\|\gamma_i\|_1=|P_i|$.  For each $i$,  there is a van Kampen diagram $D_i$ with boundary path $P_i$. Observe  the diagram $D_i$ induces a $2$-chain $\mu_i$ such that  $\partial \mu_i =  \gamma_i$.  Since 
$\|\gamma_i\|_\partial \leq \|\mu_i\|_1 \leq \area (D_i) \leq f(|P_i|) = f(\|\gamma_i\|_1)$,  
and
$\|\gamma\|_\partial \leq \|\gamma_1\|_\partial+ \cdots + \|\gamma_k\|_\partial,$
we have  
$|\gamma|_\partial \leq \bar f(n)$. Therefore $FV_X(n) \leq \bar f(n)$.
\end{proof}

The following proposition is a version of the statement that hyperbolicity in terms of thin triangles implies a linear isoperimetric inequality. 

\begin{proposition}\cite[Proposition 3.1]{Bo12} 
\label{lem:simply-connected-complex}
Let $\Gamma$ be a hyperbolic graph with hyperbolicity constant $k$. Then there is a constant $n=n(k)$ with the following property. If $\Omega_n(\Gamma)$ is the $2$-complex with $1$-skeleton the graph $\Gamma$ and such that each circuit of length at most $n$ is the boundary of a unique $2$-cell, then $\Omega_n(\Gamma)$ is simply-connected and admits a linear isoperimetric function. 
\end{proposition}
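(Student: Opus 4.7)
The plan is to establish the classical fact that a $\delta$-hyperbolic graph admits a Dehn-style presentation and to exploit this to build fillings. Let $k$ be the hyperbolicity constant of $\Gamma$, and choose $n=n(k)$ large enough (explicitly, some small multiple of $k$ plus a constant) so that the following \emph{Dehn reduction} property holds: for every closed edge path $P$ in $\Gamma$ with $|P|>n$ there exists a subpath $Q$ of $P$ of length at most $n/2$ and a path $Q'$ in $\Gamma$ with the same endpoints as $Q$ such that $|Q'|<|Q|$ and $|Q|+|Q'|\leq n$. The existence of such $n$ is a standard consequence of the thin-triangle condition: a sufficiently long subpath of $P$ cannot be a geodesic of $\Gamma$, and bigon-thinness then yields a strictly shorter replacement of bounded total length.

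With this reduction in hand, both conclusions are proved simultaneously by induction on length. Given any closed edge path $P$, decompose it at repeated vertices into a collection of simple closed subpaths, that is, circuits. For each such circuit $C$ with $|C|>n$, replace the Dehn subpath $Q\subset C$ by $Q'$; the concatenation $Q\cdot (Q')^{-1}$ is a closed path of length at most $n$, which after further decomposition into circuits is a union of boundaries of $2$-cells of $\Omega_n(\Gamma)$ by the very definition of $\Omega_n(\Gamma)$. Circuits of length at most $n$ are themselves boundaries of $2$-cells. Iterating, $P$ bounds a van Kampen diagram in $\Omega_n(\Gamma)$, which proves simple-connectivity.

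For the isoperimetric estimate, notice that each application of the Dehn reduction drops the length of the current loop by at least one while contributing at most a uniformly bounded number of $2$-cells, namely those appearing in the circuit decomposition of the short loop $Q\cdot (Q')^{-1}$, whose length is at most $n$. After at most $|P|$ iterations the length falls below $n$, and the remainder is filled by a uniformly bounded number of additional $2$-cells. This gives $\area(D)\leq C|P|$ for some constant $C=C(n)$, which is the desired linear isoperimetric function.

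The main obstacle is the careful calibration of $n$ and the verification of the Dehn reduction from the thin-triangle condition. A concrete approach is to choose vertices $u,v$ on $P$ maximising their distance along $P$ subject to $d_\Gamma(u,v)<d_P(u,v)$, and to apply bigon-thinness to a $\Gamma$-geodesic from $u$ to $v$ together with the corresponding subpath of $P$; the constants determining the diameter of the resulting bigon then fix $n$. This is a standard but technical exercise in $\delta$-hyperbolic geometry and is where most of the bookkeeping resides.
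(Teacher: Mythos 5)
The paper does not actually prove this proposition; it is quoted from Bowditch~\cite[Proposition 3.1]{Bo12}, and the argument there is the same Dehn-algorithm scheme you describe: every sufficiently long closed path in a $k$-hyperbolic graph fails to be a local geodesic on some subpath of bounded length, the replacement loop $Q\cdot (Q')^{-1}$ has length at most $n$ and is filled by $2$-cells of $\Omega_n(\Gamma)$ after decomposing it into circuits, and each reduction strictly shortens the loop while contributing boundedly many cells, yielding both simple connectivity and a linear isoperimetric function. Your sketch is correct; the only step that genuinely requires care is the calibration of $n$ via the local-geodesic/thin-bigon lemma, which you rightly flag as the technical crux and which is standard (see e.g.~\cite[Ch.~III.H, Theorem 1.13]{BrHa99}).
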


\subsection{Barycentric subdivisions, fineness, and and $FV_X$}

\begin{lemma}\label{lem:subdivision}
Let $X$ be complex with a bound on the length of attaching maps of $2$-cells, and let $Y$ be the barycentric subdivision of $X$.  Then there is a constant $B=B(X)$ such that
$FV_{X, \Z}(n) \leq FV_{Y, \Z}(Bn)$ and  $FV_{Y, \Z}(n)\leq  B\cdot FV_{X, \Z}(Bn) + Bn$
 for every integer $n$.
\end{lemma}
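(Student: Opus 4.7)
Let $L$ be an upper bound on the length of attaching maps of $2$-cells of $X$. The natural subdivision chain map $\sigma\colon C_\ast(X, \Z)\to C_\ast(Y, \Z)$ sends each $1$-cell $e$ to the sum of its two halves in $Y$ and each $2$-cell $f$ (of boundary length $\ell(f)\leq L$) to the sum of the $2\ell(f)$ small triangles produced by subdivision, so $\|\sigma(c)\|_1 \leq 2L\|c\|_1$. Call an edge of $Y$ a \emph{half-edge} if it is a half of some $1$-cell of $X$, and a \emph{radial edge} if it joins the barycenter $v_f$ of some $2$-cell $f$ to a point on $\partial f$. Each small triangle in the subdivision of $f$ has exactly two radial edges and one half-edge in its boundary.

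\textbf{First inequality.} Let $\gamma \in Z_1(X, \Z)$ with $\|\gamma\|_1 \leq n$. Then $\sigma(\gamma)\in Z_1(Y,\Z)$ satisfies $\|\sigma(\gamma)\|_1\leq 2n$, so by hypothesis there is $\mu'\in C_2(Y,\Z)$ with $\partial\mu' = \sigma(\gamma)$ and $\|\mu'\|_1\leq FV_{Y, \Z}(2n)$. Since $\sigma(\gamma)$ is supported on half-edges, every radial edge has coefficient zero in $\partial\mu'$. In each subdivided $2$-cell $f$, any two triangles sharing a radial edge contribute that edge with opposite signs to their boundaries, so the vanishing of radial coefficients forces all triangles in $f$ to share a common coefficient $c_f\in\Z$, i.e.\ $\mu'|_f = c_f\,\sigma(f)$. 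Hence $\mu := \sum_f c_f\,f \in C_2(X,\Z)$ satisfies $\partial\mu = \gamma$ and $\|\mu\|_1 = \sum_f |c_f| \leq \sum_f 2\ell(f)|c_f| = \|\mu'\|_1 \leq FV_{Y, \Z}(2n)$, giving $FV_{X, \Z}(n)\leq FV_{Y, \Z}(2n)$.

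\textbf{Second inequality.} Let $\gamma'\in Z_1(Y,\Z)$ with $\|\gamma'\|_1\leq n$. The plan is to cancel the radial part of $\gamma'$ and lift the rest to $X$. For each $2$-cell $f$, the cycle condition at the barycenter $v_f$ forces the outward-oriented coefficients $c_1,\ldots,c_{2\ell(f)}$ of the radial edges at $v_f$ to satisfy $\sum_i c_i = 0$. Adding the boundary of a single triangle of $f$ transfers a coefficient between two consecutive radial edges at the cost of one half-edge, and iteratively zeroing the partial sums $c_1,\; c_1{+}c_2,\; c_1{+}c_2{+}c_3,\ldots$ terminates successfully after at most $2\ell(f)-1 \leq 2L$ transfers, of cumulative total at most $2L\sum_i |c_i|$. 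Summing over $f$ yields $\mu'_1\in C_2(Y,\Z)$ with $\|\mu'_1\|_1\leq 2Ln$ and with $\gamma'' := \gamma' - \partial\mu'_1$ supported on half-edges; here $\|\gamma''\|_1 \leq \|\gamma'\|_1 + 3\|\mu'_1\|_1 \leq (1+6L)n$. The cycle condition at each midpoint $m_e$ of an edge $e=(u,v)$ of $X$ equates the coefficients of $(u,m_e)$ and $(m_e,v)$ in $\gamma''$, so $\gamma'' = \sigma(\gamma)$ for a unique $\gamma \in Z_1(X,\Z)$ with $\|\gamma\|_1 \leq \tfrac{1}{2}(1+6L)n$. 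Applying the hypothesis gives $\mu \in C_2(X,\Z)$ with $\partial\mu=\gamma$ and $\|\mu\|_1 \leq FV_{X,\Z}\bigl(\tfrac{1}{2}(1+6L)n\bigr)$, and the filling $\mu' := \mu'_1 + \sigma(\mu)$ satisfies $\partial\mu' = \gamma'$ with $\|\mu'\|_1 \leq 2Ln + 2L\cdot FV_{X,\Z}\bigl(\tfrac{1}{2}(1+6L)n\bigr)$. Taking $B := \max\{2L,\;\tfrac{1}{2}(1+6L)\}$ and using that $FV_{X,\Z}$ is monotone gives the required inequality.

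\textbf{Main obstacle.} The first inequality is essentially a rigidity statement: the vanishing of radial coefficients in $\partial\mu'$ rigidly forces $\mu'$ to be a subdivided chain, and no control on attaching-map lengths is even needed. The real work lies in the second inequality, where we must cancel an arbitrary radial component of a cycle in $Y$ by a filling of controlled size, and then pay a second factor of $2L$ when lifting the resulting filling in $X$ back to $Y$ via $\sigma$. Both of these steps rely crucially on the bound $L$ on attaching-map lengths; without it, the constant $B$ could not be taken uniformly.
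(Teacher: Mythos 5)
Your proof is correct and follows essentially the same strategy that the paper only sketches: compare fillings through the subdivision chain map, using the cone structure of the subdivided $2$-cells (radial edges shared by exactly two triangles of the same cell) and the bound $L$ on attaching maps to control the constants. You carry the argument out directly and in full detail, whereas the paper routes through the intermediate edge-subdivided complex $X'$ and omits the verifications; your constants are consistent with the ones asserted there.
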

\begin{proof}[Sketch of a proof]
Denote by $X'$ the cell-complex obtained subdividing each $1$-cell of $X$ into two $1$-cells by inserting an extra $0$-cell at the ``midpoint'' of each $1$-cell, and let $X''$ denote the barycentric subdivision of $X$. One verifies that $FV_{X''}(n)\leq C \cdot FV_{X'}(2Cn)+ 2Cn,$
where $C$ is the maximal length of the boundary path of a $2$-cell in $X$, from which follows that 
$FV_{X''}(n) \leq C\cdot FV_{X}(4Cn)+2Cn.$ Analogously one can show that $FV_X(n)\leq FV_{X''}(2n).$ 
\end{proof}

\begin{lemma}\cite[Lemma 2.9]{MW11}\cite[Lemma 2.4]{Bo12} \label{lem:gsubdivision} 
Let $X$ be a cocompact $G$-complex with fine $1$-skeleton and finite edge stabilizers. Then the $1$-skeleton of its barycentric subdivision  has fine $1$-skeleton.
\end{lemma}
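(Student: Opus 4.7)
The plan is to combine fineness of $X^{(1)}$ with a uniform bound $N$ on the number of $2$-cells adjacent to each edge of $X$ to establish fineness of $Y^{(1)}$.

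I would first verify the uniform bound $N$: cocompactness provides finitely many $G$-orbits of $2$-cells with attaching maps of uniformly bounded length $K$. For each edge $e$ of $X$ and each $G$-orbit representative $f$ of a $2$-cell, the $2$-cells in $G \cdot f$ adjacent to $e$ are parametrized, for each edge $e' \in \partial f$ lying in the $G$-orbit of $e$, by the coset space $\mathrm{Stab}(e')/(\mathrm{Stab}(e') \cap \mathrm{Stab}(f))$, which is finite since $\mathrm{Stab}(e')$ is. Consequently, in $Y^{(1)}$ the barycenter of any $1$-cell of $X$ has degree at most $2+N$ and the barycenter of any $2$-cell has degree at most $2K$; only the barycenters $\hat v$ of $0$-cells $v$ of $X$ can have infinite degree.

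Now fix an edge $e_Y$ of $Y^{(1)}$, corresponding to a flag $\sigma \subsetneq \tau$ in $X$, and a bound $L$. Any circuit $C$ of length $\leq L$ in $Y^{(1)}$ through $e_Y$ decomposes cyclically into at most $L/2$ segments separated by its visits to original-vertex barycenters. Each segment lies in the locally finite subgraph of $Y^{(1)}$ obtained by removing original-vertex barycenters (apart from at the segment's endpoints), so once its first edge is fixed the segment admits only finitely many extensions of length at most $L$. Thus the freedom reduces to the combinatorial data at each vertex visit $\hat v$: the pair $(\omega, \omega')$ of cells of $X$ containing $v$ through which $C$ enters and exits $\hat v$.

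To bound these pairs, I would project each segment between consecutive vertex visits at $\hat v$ and $\hat{v'}$ to a walk in $X^{(1)}$ of length at most $LK$ that avoids $v$ and $v'$, connecting suitable vertices of $\omega$ and $\omega'$. Fineness of $X^{(1)}$---in its ``angle metric'' reformulation, whereby for any edge $e$ at a vertex $v$ of $X^{(1)}$ and any $L'$ the set of edges $e''$ at $v$ reachable from $e$ by a path in $X^{(1)} \setminus \{v\}$ of length $\leq L'$ is finite---together with the bound $N$ on $2$-cells per edge, restricts the pairs $(\omega, \omega')$ at each $\hat v$ to a finite set. Iterating over the at most $L/2$ vertex visits yields only finitely many circuits $C$ through $e_Y$.

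The main obstacle is the passage from the direct definition of fineness (finiteness of circuits through each edge) to the angle-metric formulation needed to bound segments between vertex visits. This equivalence is standard but nontrivial; alternatively, one can avoid it by a sequential inductive argument that applies fineness of $X^{(1)}$ directly at each vertex visit, at the cost of more intricate combinatorial bookkeeping tracking the cumulative constraints along the segments of $C$.
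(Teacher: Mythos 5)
Your overall strategy --- reduce to finiteness of the possible entry/exit pairs $(\omega,\omega')$ at each $0$-cell barycenter, using local finiteness away from the $0$-cell barycenters --- is sound, and it is essentially a direct reproof of the arc-attachment lemma that the paper simply cites. The paper's own argument is much shorter: it observes that the barycentric subdivision of a fine graph is fine (circuits just get subdivided), that the $1$-skeleton of the barycentric subdivision of $X$ is obtained from the subdivided $X^{(1)}$ by $G$-equivariantly attaching finitely many orbits of new arcs (the spokes of the $2$-cell barycenters), and then invokes \cite[Lemma 2.9]{MW11} or \cite[Lemma 2.4]{Bo12}, which say precisely that such an operation preserves fineness. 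So you are doing genuinely more work than the paper, which is legitimate, but it means the burden of the hard step falls on you.

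That hard step is where your sketch has a real gap. You assert that each segment of $C$ between consecutive visits to $\hat v$ and $\hat{v'}$ can be projected to a walk in $X^{(1)}$ of length at most $LK$ \emph{that avoids $v$ and $v'$}, and that this plus the angular-metric form of fineness bounds the pairs $(\omega,\omega')$. As stated this projection can fail: the relevant portion of $C$ (namely $C\setminus\{\hat v\}$, a path from $\hat\omega$ to $\hat{\omega'}$) may pass through the barycenter $\hat f$ of a $2$-cell $f$ whose attaching walk itself passes through $v$, and the induced path in $\partial f$ joining the entry and exit points of $f$ may be forced through $v$, since $\partial f$ minus the occurrences of $v$ can disconnect. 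So one does not directly obtain a path in $X^{(1)}\setminus\{v\}$ between neighbours of $v$ in $\omega$ and in $\omega'$, and the appeal to the angular metric at $v$ does not yet apply. The repair is exactly the ``intricate bookkeeping'' you defer to: whenever the projected walk is forced through $v$ inside some $2$-cell $f$, split the walk there, note that it enters and leaves $v$ along edges of $\partial f$ at $v$, bound those edges by the angular metric applied to the already-constructed initial piece, and bound $f$ itself by the hypothesis that each edge at $v$ lies in at most $N$ two-cells; then induct along the walk. Until this is written out (or replaced by the paper's reduction to \cite[Lemma 2.9]{MW11}), the central finiteness claim for the pairs $(\omega,\omega')$ is asserted rather than proved. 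The remaining ingredients of your sketch --- the uniform bound $N$, the local finiteness of $Y^{(1)}$ away from $0$-cell barycenters, and the decomposition of a circuit into at most $L/2$ segments --- are all correct.
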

\begin{proof}
 Observe that the barycentric subdivision of a fine graph is fine. Moreover,  the $1$-skeleton of the barycentric subdivision of $X$ is obtained from the barycentric subdivision of the $1$-skeleton of $X$ after $G$-equivariantly attaching finitely many orbits of new arcs (the half-diagonals or diagonals of  higher dimensional cells). This type of construction  was explicitly shown to preserve fineness in~\cite[Lem. 2.9]{MW11}; alternatively it also follows from~\cite[Lem. 2.4]{Bo12}.  
\end{proof}

\section{Homological Dehn Functions and Relative Hyperbolicity}

\subsection{Linear isoperimetric inequalities and hyperbolicity}

 \begin{definition}[Condition $FZ_N$ and Weak linear isoperimetric inequality]\cite[Def. 6.1]{Ge96}\label{def:Gersten}
Let $\Gamma$ be a graph. For an  integer $N$, we shall say that $\Gamma$ satisfies condition   $FZ_N$ if for any circuit $\gamma$ in $\Gamma$ there are circuits $\gamma_1, \gamma_2, \ldots , \gamma_k$ each of length at most $N$ such that 
\begin{equation}\label{eq:FZ}  [\gamma] =  \sum_{i=1}^k \epsilon_i [\gamma_i] \end{equation}
where $[\gamma]$ denotes the class of $\gamma$ in $H_1 (\Gamma, \Z)$ and $\epsilon_i=\pm 1$.  If $\Gamma$ satisfies $FZ_N$, then the \emph{weak area} of the circuit $\gamma$ is the minimum $k$ in all expressions~\eqref{eq:FZ}.  The graph $\Gamma$ satisfies a \emph{weak linear isoperimetric inequality} if there are   integers $N$ and $C$ such that $\Gamma$ satisfies $FZ_N$ and the weak-area of each circuit $\gamma$ is at most $C |\gamma|$ where $|\gamma|$ denotes the length of the circuit.
\end{definition}

The following theorem is a  version by Gersten of the fact that a (standard) linear isoperimetric inequality implies hyperbolicity.  

\begin{theorem}\cite[Thm. 6.3]{Ge96}\label{thm:weak-isop}
If $\Gamma$ is a connected graph satisfying $FZ_N$ and a weak linear isoperimetric inequality then $\Gamma$ is a hyperbolic graph.
\end{theorem}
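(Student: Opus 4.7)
The plan is to reduce the theorem to Theorem~\ref{prop:hyperbolicity}. Let $M=\max\{N,3\}$ and let $X=\Omega_M(\Gamma)$ be the $2$-complex with $1$-skeleton $\Gamma$ obtained by attaching one $2$-cell along every circuit of length at most $M$. By construction the attaching maps of $X$ have length bounded by $M$.

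The first step is to verify that $FV_{X,\Z}(n)\le Cn$. Given a circuit $\gamma$ in $\Gamma$, condition $FZ_N$ together with the weak linear isoperimetric inequality furnish circuits $\gamma_1,\dots,\gamma_k$ in $\Gamma$ of length at most $N$ and signs $\epsilon_i\in\{\pm 1\}$ with $[\gamma]=\sum_i \epsilon_i[\gamma_i]$ in $H_1(\Gamma,\Z)$ and $k\le C|\gamma|$. Since $\Gamma$ is a graph, $B_1(\Gamma,\Z)=0$, so this homological identity is actually an equality of $1$-chains $\gamma=\sum_i \epsilon_i \gamma_i$. In $X$ each $\gamma_i$ bounds a single $2$-cell $f_i$, so $\mu=\sum_i \epsilon_i f_i$ satisfies $\partial\mu=\gamma$ with $\|\mu\|_1\le C|\gamma|$. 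Applying Lemma~\ref{lem:circuitd} to an arbitrary $1$-cycle then extends this bound to $FV_{X,\Z}(n)\le Cn$ for every $n$, and hence $FV_{X,\Q}(n)\le Cn$ as well.

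To apply Theorem~\ref{prop:hyperbolicity} it remains to establish that $X$ is simply connected; this is the principal obstacle, since $FZ_N$ is a homological hypothesis whereas simple connectivity is homotopical. My approach is to enlarge $X$ to a complex $X'$ by attaching a $2$-cell along every closed edge-path of length at most $M$, not merely every simple circuit. This preserves the bound on attaching-map length, and because adding $2$-cells cannot increase $\|\cdot\|_\partial$, the linear bound on $FV$ persists for $X'$. In $X'$ every backtrack $e\bar e$ bounds a bigon, and this flexibility should allow one to convert the chain identity $\gamma=\sum_i \epsilon_i \gamma_i$ into a genuine planar van Kampen diagram: arrange the short circuits $\gamma_i$ in a tree-like pattern, glue them along matching edges with bigons absorbing the retraced arcs, and thereby fill $\gamma$ by an actual disk. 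An induction on the length of an arbitrary closed edge path in $\Gamma$, splitting at repeated vertices into shorter closed subpaths, would then reduce simple connectivity of $X'$ to the observation that every closed path of length at most $M$ bounds a single $2$-cell. Making this combinatorial conversion rigorous is the bulk of the work; once it is done, Theorem~\ref{prop:hyperbolicity} applies to $X'$ and yields that its $1$-skeleton, namely $\Gamma$, is a hyperbolic graph.
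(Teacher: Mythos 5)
The paper does not prove this statement; it imports it verbatim from Gersten~\cite{Ge96}, so there is no internal argument to compare against, and your proposal must stand on its own. Its first half does: since $\Gamma$ has no $2$-cells, $B_1(\Gamma,\Z)=0$ and the identity $[\gamma]=\sum_i\epsilon_i[\gamma_i]$ in $H_1(\Gamma,\Z)$ is indeed an equality of $1$-chains, so $FZ_N$ plus the weak linear isoperimetric inequality give $\|\gamma\|_{\partial}\leq C|\gamma|$ in $\Omega_M(\Gamma)$ for circuits, and Lemma~\ref{lem:circuitd} upgrades this to $FV_{\Omega_M(\Gamma),\Z}(n)\leq Cn$. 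This is essentially Lemma~\ref{lem:omegan2} and the argument of Corollary~\ref{cor:suskey} run in reverse, and it is correct.

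The gap is the step you defer as ``the bulk of the work'': establishing simple connectivity of $X'$ so that Theorem~\ref{prop:hyperbolicity} applies. This is not a combinatorial technicality but the essential obstruction, and the method you sketch cannot close it. A chain identity $\gamma=\sum_i\epsilon_i\gamma_i$ in $Z_1(\Gamma,\Z)$ records only that $\gamma$ lies in the subgroup of $H_1(\Gamma,\Z)\cong\pi_1(\Gamma)^{\mathrm{ab}}$ generated by the short circuits, whereas a van Kampen diagram for $\gamma$ over $X'$ exists if and only if $\gamma$ lies in the \emph{normal closure} of those circuits in the free group $\pi_1(\Gamma)$. These two conditions differ precisely when the quotient of $\pi_1(\Gamma)$ by that normal closure is a nontrivial perfect group, and nothing in the hypotheses $FZ_N$ and weak linear isoperimetric inequality rules this out a priori --- it is ruled out only a posteriori, via hyperbolicity and Proposition~\ref{lem:simply-connected-complex}, which is the conclusion you are trying to reach. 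The bigons of $X'$ do not help: any closed edge path of length at most $M$ is already, as an element of the free group $\pi_1(\Gamma)$, a product of conjugates of circuits of length at most $M$ (split at the first return to a visited vertex and induct), so $\pi_1(X')=\pi_1(\Omega_M(\Gamma))$ and the enlargement buys nothing. Theorem~\ref{prop:hyperbolicity} genuinely requires simple connectivity, since its proof (via Gersten, Mineyev, and the Papasoglu--Ol'shanskii constructions) manipulates van Kampen diagrams; Gersten's own proof of the statement at hand avoids the issue entirely by working directly with homological fillings of cycles in the graph, never passing through a simply connected $2$-complex. To repair your argument you would have to either supply a genuinely new proof that $\Omega_M(\Gamma)$ is simply connected under these hypotheses, or abandon the reduction and argue hyperbolicity directly from the homological filling bound.
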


\begin{corollary}\label{cor:suskey}
Let $X$ be a $1$-acyclic $2$-dimensional cell complex such that there is a bound on the length of  attaching maps of $2$-cells. If $FV_{X, \Z}$ is linearly bounded then the $1$-skeleton of $X$ is a fine hyperbolic graph.\end{corollary}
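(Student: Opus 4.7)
The plan is to establish the two conclusions of the corollary separately, using Theorem~\ref{thm:weak-isop} for hyperbolicity and Theorem~\ref{thm:answer}(1) for fineness of $\Gamma:=X^{(1)}$. Let $D$ bound the length of attaching maps of $2$-cells in $X$ and let $C\geq 0$ satisfy $FV_{X,\Z}(k)\leq Ck$.

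For hyperbolicity I would verify that $\Gamma$ satisfies $FZ_D$ together with a weak linear isoperimetric inequality and then invoke Theorem~\ref{thm:weak-isop}. Given a circuit $\gamma$ in $\Gamma$ of length $n$, view it as a $1$-cycle in $X$ with $\|\gamma\|_1=n$. Linearity produces a filling $\mu=\sum_{i=1}^{N}\epsilon_i f_i\in C_2(X,\Z)$ with $\partial\mu=\gamma$ and $N\leq Cn$. Each $\partial f_i$ is a $1$-cycle in $\Gamma$ of norm at most $D$, and by Lemma~\ref{lem:circuitd} decomposes as a sum of at most $D$ circuits, each of length at most $D$. Substituting into $\gamma=\sum_i\epsilon_i\partial f_i$ exhibits $\gamma$ as a signed sum of at most $CDn$ circuits of length at most $D$; since $\Gamma$ carries no $2$-cells, this identity also holds in $H_1(\Gamma,\Z)$. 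This is precisely $FZ_D$ with weak area linear in $|\gamma|$, so Theorem~\ref{thm:weak-isop} delivers hyperbolicity of $\Gamma$.

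For fineness the plan is to apply Theorem~\ref{thm:answer}(1) to $X$. Linearity supplies $FV_{X,\Z}(k)<\infty$ for every $k$; the remaining hypothesis, that each $1$-cell of $X$ be adjacent to only finitely many $2$-cells, needs to be discharged. In the cocompact-group settings where the corollary is subsequently invoked (see Theorems~\ref{thm:forfuture} and~\ref{thm:chrhyp}), this local finiteness is automatic from finite edge stabilizers. In the bare generality of the statement one would aim to secure it by passing from $X$ to the subcomplex obtained by retaining one $2$-cell per attaching-map class; the projection $C_2(X,\Z)\to C_2(X',\Z)$ preserves boundaries and does not increase the $\ell_1$-norm, so $X^{(1)}$ is unchanged and $FV_{X',\Z}\leq FV_{X,\Z}$ remains linear, while the length bound $D$ now constrains how many distinct boundaries can meet a given edge.

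The hyperbolicity half I expect to be routine once the above dictionary between $2$-chain fillings in $X$ and signed sums of short circuits in $\Gamma$ is in place. The main obstacle I foresee is the fineness half: the decomposition used for Theorem~\ref{thm:weak-isop} says nothing about the number of $2$-cells meeting a single edge, so verifying the local-finiteness hypothesis of Theorem~\ref{thm:answer}(1) seems to require either a careful reduction of the above type or additional input from the cocompact settings in which the corollary is actually applied.
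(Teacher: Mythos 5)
Your hyperbolicity argument is correct and is essentially the paper's, with one small variation: the paper first replaces $X$ by its barycentric subdivision (via Lemma~\ref{lem:subdivision} and quasi-isometry invariance of hyperbolicity) so that each $\partial f_i$ is itself a circuit of bounded length, whereas you keep $X$ as is and decompose each $\partial f_i$ into at most $D$ circuits of length at most $D$ using Lemma~\ref{lem:circuitd}. Your route is slightly more economical and equally valid; both arrive at $FZ_N$ with linearly bounded weak area and then invoke Theorem~\ref{thm:weak-isop}.

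On the fineness half you have put your finger on a genuine issue, but it is an issue with the statement itself rather than a defect you need to repair: Proposition~\ref{prop:fine-crit} (equivalently the first case of Theorem~\ref{thm:answer}) requires each $1$-cell to be adjacent to only finitely many $2$-cells, and the corollary does not hypothesize this. The paper's own proof simply writes ``Since $FV_X$ is finite-valued, Proposition~\ref{prop:fine-crit} implies that the $1$-skeleton of $X$ is fine,'' silently using that hypothesis. Be aware, however, that your proposed repair---retaining one $2$-cell per attaching-map class---does not close the gap: in a graph that is not locally finite, a single edge can lie on infinitely many \emph{distinct} circuits of bounded length, so even after the reduction an edge may meet infinitely many $2$-cells. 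In fact the statement is false without some local finiteness: take two vertices joined by edges $e_0, e_1, e_2, \dots$ and attach one $2$-cell along each circuit $e_0\bar e_i$ for $i\geq 1$. This complex is $1$-acyclic and $2$-dimensional, its attaching maps have length $2$, and $FV_{X,\Z}(k)\leq k$, yet $e_0$ lies on infinitely many circuits of length $2$, so the $1$-skeleton is not fine. The correct fix is to add the hypothesis that each $1$-cell of $X$ is adjacent to finitely many $2$-cells; this holds in every application the paper makes of the corollary (in Theorem~\ref{thm:chrhyp}, via cocompactness and finite stabilizers of $1$-cells, exactly as you observed). With that hypothesis in place your fineness step coincides with the paper's.
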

\begin{proof}
The assumption that there is a bound on the length of the attaching maps of $2$-cells implies that the $1$-skeleton of $X$ and the one of its barycentric subdivision are quasi-isometric. Since hyperbolicity is invariant under quasi-isometry, in view of Lemma~\ref{lem:subdivision},  we can replace $X$ with its  barycentric subdivision and assume that the attaching maps of $2$-cells are circuits.   Let $\gamma$ be a circuit in the $1$-skeleton of $X$. Abusing notation we denote by $\gamma$ the induced $1$-cycle.  Since $X$ has trivial first homology, there is a $2$-chain $\beta \in C_2(X)$ such that $\partial \beta =  \gamma$ and $\|\gamma\|_\partial = \|\beta\|_1$.  Let $N$ be an upper bound for the length of boundary paths of  $2$-cells of $X$, which are assumed to be circuits. It follows that $\gamma = \partial \beta = \sum_{i=1}^m \epsilon_i  \gamma_i$ where each $\gamma_i$ is a  $1$-cycle  induced by a circuit of length at most $N$, and $m=\|\beta\|_1$. It follows that 
$m=\|\beta\|_1 \leq FV_{X, \Z}(\|\gamma\|_1)\leq C \|\gamma\|_1$, 
where $C$ depends only on $X$. Therefore  the $1$-skeleton of $X$ is $FZ_N$ and satisfies a weak linear isoperimetric inequality. By Theorem~\ref{thm:weak-isop}, the  $1$-skeleton of $X$ is a hyperbolic graph.
Since $FV_X$ is finite-valued, Proposition~\ref{prop:fine-crit} implies that the $1$-skeleton of $X$ is fine.
\end{proof}

\subsection{Proof of  Theorem~\ref{thm:answer}}

\begin{proof}[Proof of Theorem~\ref{thm:answer}]
The first statement of the theorem is Proposition~\ref{prop:fine-crit}. 
For the second statement, the assumptions together with Theorem~\ref{prop:hyperbolicity} imply that the $1$-skeleton of $X$ is hyperbolic. It follows that $X$ admits a linear isoperimetric inequality in the standard sense, this follows for example from~\cite[Ch.III.H Proposition 2.2]{BrHa99} or Proposition~\ref{lem:simply-connected-complex}. Then Proposition~\ref{prop:finiteFV} together with Remark~\ref{rem:linearclosure} imply that $FV_{\Z, X}$ is bounded by a linear function, and hence Proposition~\ref{prop:fine-crit} implies that the $1$-skeleton of $X$ is fine. 
\end{proof}

Proposition~\ref{prop:definitions} shows that the Definition~\cite[2.28]{GrMa09} and  Definition~\ref{def:hom-isop-ineq} of linear homological  isoperimetric inequality are equivalent. 

\begin{proposition}\label{prop:definitions}
A complex $X$ satisfies a {linear homological isoperimetric inequality over $\K$} if and only if there is a constant $A\geq 0$ such that for any circuit $c$ in the $1$-skeleton of $X$  there is $\beta\in C_2(X, \K)$ such that $\partial \beta$ equals  the $1$-cycle induced by $c$ and $\|\beta\|_1 \leq A |c|$.
\end{proposition}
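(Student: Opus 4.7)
The plan is to prove the two implications independently, treating the two occurrences of ``a constant $A$'' as independent existential quantifiers which may differ by an additive constant. The backward direction is where the real content lies: it uses Lemma~\ref{lem:circuitd} to reduce an arbitrary integer $1$-cycle to a sum of $1$-cycles induced by circuits.

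For the implication ($\Leftarrow$), suppose for every circuit $c$ there is $\beta_c \in C_2(X,\K)$ with $\partial\beta_c$ equal to the $1$-cycle induced by $c$ and $\|\beta_c\|_1 \leq A|c|$. Given $\gamma \in Z_1(X,\Z)$ with $\|\gamma\|_1 \leq k$, I would apply Lemma~\ref{lem:circuitd} to write $\gamma = \sum_i \alpha_i$ where each $\alpha_i$ is induced by a circuit $c_i$ of length $|c_i| = \|\alpha_i\|_1$ and $\sum_i \|\alpha_i\|_1 = \|\gamma\|_1$. Pick $\beta_i \in C_2(X,\K)$ by the hypothesis applied to $c_i$, and set $\beta = \sum_i \beta_i$. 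Then $\partial\beta = \gamma$ and
\[\|\beta\|_1 \;\leq\; \sum_i \|\beta_i\|_1 \;\leq\; A\sum_i |c_i| \;=\; A\|\gamma\|_1 \;\leq\; Ak,\]
so $\|\gamma\|_\partial \leq Ak$ and hence $FV_{X,\K}(k) \leq Ak$.

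For the implication ($\Rightarrow$), suppose $FV_{X,\K}(k) \leq Ak$ for every integer $k$, and let $c$ be a circuit with induced $1$-cycle $\gamma_c$. Since $\|\gamma_c\|_1 = |c|$, the definition of $FV_{X,\K}$ gives $\|\gamma_c\|_\partial \leq A|c|$. When $\K=\Z$ the $\ell_1$-norm on $C_2(X,\Z)$ is integer-valued, so the infimum defining $\|\gamma_c\|_\partial$ is attained and I obtain a filling $\beta$ with $\|\beta\|_1 \leq A|c|$. When $\K=\Q$ or $\R$ the infimum need not be attained, but since $|c|\geq 1$ one can choose $\beta$ with $\|\beta\|_1 \leq A|c|+1 \leq (A+1)|c|$, which establishes the circuit-filling condition with constant $A+1$.

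The only obstacle worth mentioning is precisely this last point: over a field, the infimum defining $\|\cdot\|_\partial$ need not be realized by an actual chain, so the constant in the circuit-filling formulation has to be taken slightly larger than the one in the $FV$-formulation. This is a cosmetic adjustment and does not affect the equivalence of the two notions of linear homological isoperimetric inequality.
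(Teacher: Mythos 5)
Your proof is correct and follows essentially the same route as the paper: the forward direction rests on the observation that $\|\gamma_c\|_1=|c|$ for the $1$-cycle induced by a circuit $c$, and the backward direction decomposes an arbitrary integral cycle via Lemma~\ref{lem:circuitd} and sums the circuit fillings. Your additional remark that the infimum defining $\|\cdot\|_{\partial,\K}$ need not be attained over $\Q$ or $\R$ is a legitimate point the paper's one-line \emph{only if} argument glosses over, and your fix (enlarging the constant to $A+1$, using $|c|\geq 1$) is the right one.
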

\begin{proof}
The \emph{only if part}  follows from the observation that for a circuit $c$, the $\ell_1$-norm of the induced $1$-cycle and the combinatorial length $|c|$ are equal. For the \emph{if part}, invoking Lemma~\ref{lem:circuitd}, any cycle $\gamma \in Z_1(X, \Z)$ is a finite sum $\sum_{i} \alpha_i$ where each $\alpha_i$ is a $1$-cycle induced by a circuit and $\|\gamma\|_1 = \sum_{i} \|\alpha_i\|_1$.  For this type of expression, we have that $\|\gamma\|_\partial \leq \sum_{i} \|\alpha_i\|_\partial$ from which the implication follows. 
\end{proof}

\subsection{Proof of Theorem~\ref{thm:forfuture}}

\begin{remark}\label{rem:linearity}
Let $X$ be a complex and suppose there is $C\geq 0$ such that $FV_{X, \Z}(k)\leq Ck$ for every $k$.
Then $FV_{X, \Q}(k) \leq Ck$ for every $k$. Indeed, let $\alpha$ by a $1$-cycle in $Z_1(X, \Q)$. Then there is an integer $m$ such that $m\alpha \in Z_1(X, \Z)$.  It follows that there is a $2$-chain $\beta \in C_2(X, \Z)$ such that $\partial \beta = m\alpha$ and $\|\beta\|_1 \leq FV_{X, \Z}(\|m\alpha\|_1) \leq C\|m\alpha\|_1\leq mC\|\alpha\|_1.$  In particular, $ \partial \frac1m \beta =\alpha$ and $\|\frac1m \beta\|_1 \leq C\|\alpha\|_1$. Since $\alpha$ was an arbitrary element, we have $FV_{X, \Q}(k)\leq Ck$. 
\end{remark}

The following lemma uses notation introduced in Lemma~\ref{lem:simply-connected-complex}.

\begin{lemma}\label{lem:omegan2}
Let $\Gamma$ be a connected, fine, hyperbolic graph  equipped with a cocompact $G$-action with finite edge stabilizers.  If $n$ is a large enough integer, then $X=\Omega_n(\Gamma)$ is a simply-connected cocompact $G$-complex such that $FV_{X, \Z}$ is bounded from above by a linear function.
\end{lemma}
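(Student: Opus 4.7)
The plan is to assemble the conclusion from three ingredients already established in the excerpt: Bowditch's construction of a simply connected 2-complex from a hyperbolic graph (Proposition~\ref{lem:simply-connected-complex}), cocompactness coming for free from fineness, and the passage from a classical linear isoperimetric function to a linear bound on $FV_{X,\Z}$ via Proposition~\ref{prop:finiteFV}.

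First, I would apply Proposition~\ref{lem:simply-connected-complex} to choose a threshold integer $n = n(\delta)$, depending on the hyperbolicity constant $\delta$ of $\Gamma$, so that $X := \Omega_n(\Gamma)$ is simply connected and admits a classical (van Kampen) linear isoperimetric function $f(k) = Ck$. Fix such an $n$ for the rest of the argument.

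Second, I would verify that the $G$-action on $\Gamma$ extends to a cocompact $G$-action on $X$. The action extends canonically because $G$ permutes circuits of $\Gamma$ of any fixed length, and each such circuit of length at most $n$ is by construction the boundary of a unique $2$-cell of $X$. The $1$-skeleton of $X/G$ coincides with $\Gamma/G$, which is finite since $G$ acts cocompactly on $\Gamma$; hence it remains to count $G$-orbits of $2$-cells, equivalently $G$-orbits of circuits of length at most $n$ in $\Gamma$. Fix edges $e_1, \ldots, e_r$ whose $G$-orbits exhaust the edges of $\Gamma$. Any circuit of length at most $n$ contains some edge, hence some $G$-translate of an $e_i$, so it lies in the $G$-orbit of a circuit through $e_i$. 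By fineness of $\Gamma$, only finitely many circuits of length at most $n$ pass through any given $e_i$, and we conclude that there are finitely many $G$-orbits of $2$-cells. Combined with cocompactness on the $1$-skeleton, this gives that $X$ is a cocompact $G$-complex.

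Third, I would combine the linear classical isoperimetric function from Step 1 with Proposition~\ref{prop:finiteFV}. Since $X$ is simply connected and $f(k) = Ck$ is an isoperimetric function, Proposition~\ref{prop:finiteFV} gives $FV_{X, \Z}(k) \leq \bar f(k)$, and by Remark~\ref{rem:linearclosure} the superadditive closure of $Ck$ is $Ck$, yielding the desired linear bound $FV_{X, \Z}(k) \leq Ck$ for every $k \in \mathbb{N}$.

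There is essentially no serious obstacle here, as the hard analytic content is packaged in Proposition~\ref{lem:simply-connected-complex} (and ultimately in Bowditch's thin-triangle argument). The only step requiring real verification is the cocompactness of the $G$-action on $X$, which is a routine bookkeeping exercise using fineness together with cocompactness on the $1$-skeleton, as outlined above.
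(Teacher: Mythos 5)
Your proposal is correct and follows essentially the same route as the paper's own proof: invoke Proposition~\ref{lem:simply-connected-complex} to get simple connectivity and a linear isoperimetric function, use fineness plus finitely many edge orbits to bound the number of $G$-orbits of $2$-cells, and conclude via Proposition~\ref{prop:finiteFV} and Remark~\ref{rem:linearclosure}. Your write-up of the cocompactness step is just a slightly more detailed version of the one-line argument in the paper.
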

\begin{proof}
Invoke Proposition~\ref{lem:simply-connected-complex} to obtain an integer $n$ such that   $X=\Omega_n(\Gamma)$ is a simply-connected complex with $1$-skeleton $\Gamma$ and with  linear isoperimetric function. The $G$-action on $\Gamma$ extends to an action on $X$.  By construction,  the collection of $2$-cells of $X$ are in one-to-one correspondence with circuits in $\Gamma$ of length at most $n$. Since there are finitely many $G$-orbits of $1$-cells in $\Gamma$ and each $1$-cell appears in finitely many circuits of length at most $n$, there are finitely many $G$-orbits of $2$-cells. Remark~\ref{rem:linearclosure} and Lemma~\ref{prop:finiteFV} imply that $FV_{X, \Z}$ is bounded from above by a linear function.
\end{proof}

\begin{proof}[Proof of Theorem~\ref{thm:forfuture}]
By taking a (double) barycentric subdivision of $Y$, assume that attaching maps of $2$-cells of $Y$ are embedded circuits in its $1$-skeleton and there no pairs of $2$-cells with the same boundary path. Observe that taking barycentric subdivisions preserve the hypothesis on $Y$ in view of Lemmas~\ref{lem:subdivision}  and~\ref{lem:gsubdivision}. 

Let $\Gamma$ be the $1$-skeleton of $Y$.  By Lemma~\ref{lem:omegan2}, there is  $n$ such that $X=\Omega_n(\Gamma)$ is a simply-connected cocompact $G$-complex such that $FV_{X, \Z}$ is bounded from above by a linear function. By the assumption on the attaching maps of $2$-cells of $Y$,  taking $n$ large enough implies that $Y$ can be considered as a $G$-equivariant subcomplex of $X$.  

By Theorem~\ref{prop:main}, $FV_{X, \Z}(k)<\infty$ and $FV_{Y, \Z}(k)<\infty$ for every $k$. 
Since $Y$ and $X$ have the same $1$-skeleton, if we let $C=FV_{Y,\Z}(n)$ then  $FV_{Y,\Z}(k) \leq C\cdot FV_{X,\Z}(k)$ for every $k$. Indeed, this follows by observing that any $2$-chain $\mu \in C_2(X)$ can be replaced by a $2$-chain $\nu \in C_2(Y)$ such that $\partial \mu =\partial \nu$ and $\|\nu\|_1 \leq C\|\mu\|_1$.  It follows that $FV_{Y, \Z}$ is also  bounded from above by a linear function.
\end{proof}

 \subsection{Proof of Theorem~\ref{thm:chrhyp}}

 \begin{proof}[Proof of Theorem~\ref{thm:chrhyp}]
Suppose $G$ is hyperbolic relative to $\mc P$.  A complex $X$ with the required properties is obtained by invoking Lemma~\ref{lem:omegan2}.

Conversely,  suppose that there is a complex $X$ with the four properties. By cocompactness there is a bound on the length of attaching maps of $2$-cells. Then Corollary~\ref{cor:suskey} implies that the $1$-skeleton $\Gamma$ of $X$ is a fine hyperbolic graph, and hence  the $G$-action on $\Gamma$ satisfies Definition~\ref{def:BowRH} of relative hyperbolicity.
\end{proof}

\subsection{Coned-off Cayley Complexes and Homological Dehn Functions}\label{sec:3.3}

Let $G$ be a group and let $\mc P$ be a finite collection of finitely generated subgroups.
Suppose there is a finite relative presentation   $\langle S, \mc P |  r_1, \ldots, r_m \rangle$ of $G$   with respect $\mc P$, for a definition see~\cite{Os06}.  Assume  that each $P\in \mc P$ is generated by $S\cap P$, and that $S$ is symmetrized, that is, $S=S^{-1}$. Assume that for each $s\in S$, the relation $ss^{-1}$ is one of the $r_i$'s.  

The \emph{coned-off Cayley graph $\hat \Gamma = \hat \Gamma (G, \mc P, S)$  of $G$ relative to $\mc P$ and $S$} is the $G$-graph obtained from the standard Cayley graph of $G$ with respect to $S$,  by adding  a new (cone) vertex $v(gP)$ for each left coset $gP$ with $g\in G$ and $P\in \mc P$, and edges from $v(gP)$ to each element of $P$. The cone-vertices are in one-to-one correspondence with the collection of left cosets of subgroups in $\mc P$, the $G$-action on the cone-vertices is defined using the corresponding $G$-action on left cosets by $G$.

 The \emph{coned-off Cayley complex $\hat C$} induced by the relative presentation  $\langle S, \mc P |  r_1, \ldots, r_m \rangle$,  it is the $2$-complex obtained by equivariantly attaching $2$-cells to the coned-off Cayley graph $\hat \Gamma$  as follows.  Observe that the relators $r_i$ correspond to loops in $\hat \Gamma$. Attach a $2$-cell with trivial stabilizer to each such loop, and extend in a manner equivariant under the $G$-action on $\hat \Gamma$. Similarly, for each $P\in \mc P$, for each generator in $s\in S\cap P$ and each $g\in G$ corresponds a loop in $\hat \Gamma$ of length three passing through the vertices $g, gs, v(gP)$, where $v(gP)$ is the cone-vertex corresponding to the left coset $gP$. Attach a $2$-cell with trivial stabilizer to each such loop, equivariantly under the $G$-action. This definition of the coned-off Cayley complex appears in~\cite[Definition 2.47]{GrMa09}.
 
The following characterization of relative hyperbolicity in terms of linear homological Dehn functions on coned-off Cayley complexes appears in the work of Groves and Manning~\cite[Theorem 3.25]{GrMa09}. Their proof uses other characterizations of relative hyperbolicity. Below we provide the sketch of a more direct proof of this characterization using the results of this note. 

\begin{theorem}
Let $G$ be a group and let $\mc P$ be a finite collection of finitely generated subgroups. The following statements are equivalent. 
\begin{enumerate}
\item \label{eq1} $G$ is hyperbolic relative to $\mc P$ in the sense of Bowditch, Definition~\ref{def:BowRH}.
\item \label{eq2} $G$ is finitely presented relative to $\mc P$, and for any finite relative presentation $\langle S, \mc P | \mc R \rangle$,  the coned-off Cayley complex $\hat C = \hat C(G, \mc P, S)$  satisfies a linear homological  isoperimetric inequality over the integer numbers.
\item \label{eq3} There is a finite relative presentation $\langle S, \mc P | \mc R \rangle$ such that the corresponding coned-off Cayley complex $\hat C = \hat C(G, \mc P, S)$  satisfies a linear homological  isoperimetric inequality over the integer  numbers.
\item \label{eq4} $G$ is finitely presented relative to $\mc P$, and for any finite relative presentation $\langle S, \mc P | \mc R \rangle$,  the coned-off Cayley complex $\hat C = \hat C(G, \mc P, S)$  satisfies a linear  homological isoperimetric inequality over the rational numbers.
\item \label{eq5} There is a finite relative presentation $\langle S, \mc P | \mc R \rangle$ such that the corresponding coned-off Cayley complex $\hat C = \hat C(G, \mc P, S)$  satisfies a  linear homological isoperimetric inequality over the rational numbers.
\end{enumerate}
\end{theorem}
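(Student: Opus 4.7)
The plan is to establish the three substantive implications $(3) \Rightarrow (1)$, $(5) \Rightarrow (3)$, and $(1) \Rightarrow (2)$. The remaining steps are formal: $(2) \Rightarrow (3)$ and $(4) \Rightarrow (5)$ are tautologies, while $(2) \Rightarrow (4)$ and $(3) \Rightarrow (5)$ are Remark~\ref{rem:linearity}. The backbone is that the coned-off Cayley complex $\hat C$ of any finite relative presentation $\langle S, \mc P \mid \mc R \rangle$ is simply connected (hence $1$-acyclic), has a cocompact $G$-action (since $S$, $\mc R$ and $\mc P$ are finite and each $P\in \mc P$ is generated by $S\cap P$), has trivial stabilizers of $1$-cells and $2$-cells, and has $0$-cell stabilizers which are either trivial (at Cayley vertices) or $G$-conjugates of some $P \in \mc P$ (at the cone vertex $v(gP)$, with stabilizer $gPg^{-1}$).

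Given these structural properties, $(3) \Rightarrow (1)$ is an immediate application of Theorem~\ref{thm:chrhyp} with $X = \hat C$: the four conditions are met and the linear $\Z$-isoperimetric inequality is precisely $(3)$. For $(5) \Rightarrow (3)$, observe that the finite set $\mc R$, together with the triangular $2$-cells attached at each cone vertex, imposes a uniform bound on the lengths of attaching maps of $2$-cells of $\hat C$, and cocompactness plus triviality of the relevant stabilizers implies that each $1$-cell of $\hat C$ is adjacent to only finitely many $2$-cells. Since $\hat C$ is simply connected and $FV_{\hat C, \Q}$ is linearly bounded by assumption, Theorem~\ref{prop:hyperbolicity} forces the $1$-skeleton of $\hat C$ to be hyperbolic and Theorem~\ref{thm:answer}(2) forces it to be fine. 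Theorem~\ref{thm:forfuture}, applied to the $1$-acyclic cocompact $G$-complex $\hat C$ with fine hyperbolic $1$-skeleton and finite $1$-cell stabilizers, yields a linear bound on $FV_{\hat C, \Z}$, which is $(3)$ with the same relative presentation witnessing both statements.

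For $(1) \Rightarrow (2)$, two inputs beyond the present note are required: Osin's theorem that a group hyperbolic relative to $\mc P$ in the sense of Bowditch is finitely presented relative to $\mc P$, and the fact that for any finite relative presentation the coned-off Cayley graph $\hat \Gamma(G, \mc P, S)$ is itself a $(G, \mc P)$-graph in the sense of Definition~\ref{def:BowRH}. Granting these, for any finite relative presentation $\hat C$ is a $1$-acyclic cocompact $G$-complex with finite edge stabilizers whose $1$-skeleton is fine and hyperbolic, so Theorem~\ref{thm:forfuture} delivers the linear $\Z$-isoperimetric inequality, proving $(2)$. The main obstacle of the argument is precisely this verification that the coned-off Cayley graph of a relatively hyperbolic group is fine and hyperbolic: the natural route is to compare $\hat \Gamma(G, \mc P, S)$ with a prescribed $(G, \mc P)$-graph $\Gamma$ via a $G$-equivariant map that collapses each left coset $gP$ onto a bounded neighbourhood of the cone vertex $v(gP)$, and then appeal to the fact that fineness and hyperbolicity are preserved under such controlled modifications, in the spirit of Lemma~\ref{lem:gsubdivision}.
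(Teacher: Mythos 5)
Your proposal is correct, and it runs on the same engine as the paper's proof --- Theorem~\ref{thm:answer}, Theorem~\ref{prop:hyperbolicity}, Theorem~\ref{thm:forfuture}, and the external fact that the coned-off Cayley graph of a finite relative presentation of a relatively hyperbolic pair is a $(G,\mc P)$-graph --- but the cycle of implications is arranged differently. The paper closes the loop via $\eqref{eq5}\Rightarrow\eqref{eq1}$, showing directly that the $1$-skeleton of $\hat C$ is a simply connected, fine, hyperbolic, cocompact $G$-graph with trivial edge stabilizers and hence a $(G,\mc P)$-graph; you instead prove $\eqref{eq5}\Rightarrow\eqref{eq3}$ by pushing on with Theorem~\ref{thm:forfuture}, and then obtain $\eqref{eq3}\Rightarrow\eqref{eq1}$ as a black-box application of Theorem~\ref{thm:chrhyp}. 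That is a clean reuse of the characterization theorem, at the cost of verifying its fourth hypothesis (that the $0$-cell stabilizers of $\hat C$ are trivial at Cayley vertices and equal to $gPg^{-1}$ at $v(gP)$, so that $\mc P$ represents every infinite conjugacy class), which you do. The one place your sketch drifts from what actually works is the mechanism proposed for the key external input in $\eqref{eq1}\Rightarrow\eqref{eq2}$: a $G$-equivariant map collapsing each coset $gP$ toward $v(gP)$ cannot by itself deliver fineness of $\hat\Gamma(G,\mc P,S)$, because fineness is not a quasi-isometry invariant and is not transported forward along such comparison maps. The construction the paper cites (Dahmani, Hruska, Mart\'inez-Pedroza--Wise) goes the other way: one enlarges a given $(G,\mc P)$-graph by finitely many $G$-orbits of new edges with finite stabilizers --- the operation that Lemma~\ref{lem:gsubdivision} certifies as fineness-preserving --- so that $\hat\Gamma$ sits inside it $G$-equivariantly and quasi-isometrically as a subgraph, and then one uses that fineness passes to subgraphs while hyperbolicity passes to quasi-isometric ones. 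A minor further difference: the paper derives relative finite presentability internally from Lemma~\ref{lem:omegan2} rather than quoting Osin. Neither point affects the correctness of your global structure.
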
 

\begin{proof}
The implications $\eqref{eq2} \Rightarrow \eqref{eq3}$ and $\eqref{eq4} \Rightarrow \eqref{eq5}$ are trivial. 
The implications $\eqref{eq2} \Rightarrow \eqref{eq4}$ and $\eqref{eq3} \Rightarrow \eqref{eq5}$ follow from the observation that   for a complex $X$, if $FV_{X, \Z}$ is linearly bounded, then $FV_{X, \Q}$ is linearly bounded as well; see Remark~\ref{rem:linearity}. 

\begin{equation}\nonumber \xymatrix{ 
 \eqref{eq2} \ar@{=>}[dd] \ar@{=>}[rr]  &  &  \eqref{eq3} \ar@{=>}[dd]\\
   &     \eqref{eq1} \ar@{=>}[lu] &  \\
 \eqref{eq4}  \ar@{=>}[rr] \ar@{=>}[rr] &  &  \eqref{eq5} \ar@{=>}[lu]
  }\end{equation}

The implication $\eqref{eq5} \Rightarrow \eqref{eq1}$ is proved as follows.
If $\hat C$ is the coned-off Cayley complex corresponding to a finite relative presentation satisfying a linear homological isoperimetric inequality over the rational numbers, then its $1$-skeleton is a cocompact $G$-graph with trivial edge stabilizers by construction, it is simply-connected~\cite[Lemma 2.48]{GrMa09},  it is fine by Theorem~\ref{thm:answer}\eqref{main2}, and it is hyperbolic by Theorem~\ref{prop:hyperbolicity}. 

The implication $\eqref{eq1} \Rightarrow \eqref{eq2}$ is proved as follows.
Let $\mc K$ be a $(G, \mc P)$-graph, see  Definition~\ref{def:BowRH}. Then Lemma~\ref{lem:omegan2} implies that $\mc K$ is the $1$-skeleton of a simply-connected cocompact $G$-complex, and from here one verifies that $G$ is finitely presented relative to $\mc P$. Let  $\langle S, \mc P | \mc R \rangle$ be an arbitrary finite relative presentation of $G$ with respect to $\mc P$,  let $\hat C$ be the corresponding coned-off Cayley complex, and let $\hat \Gamma$ be its $1$-skeleton.  The a combinatorial construction shows that $\hat \Gamma$ quasi-isometrically embeds as a subgraph of a $(G, \mc P)$-graph; this construction has been studied by different authors, first in Dahmani's thesis~\cite[Proof of Lemma A.4]{Da03}, then in Hruska's work~\cite[Proof of (R-H4) $\Rightarrow$ (RH-5)]{HK08}, and also by Wise and the author of this note~\cite[Proposition 4.3]{MaWi11}. Since hyperbolicity is preserved by quasi-isometry and fineness is preserved by taking subgraphs, it follows that $\hat \Gamma$ is a $(G, \mc P)$-graph. Then Theorem~\ref{thm:forfuture} implies that $\hat C$ satisfies a homological linear isoperimetric inequality over the integers. 
\end{proof}

\section*{Acknowledgements}
We thank Gaelan Hanlon for useful comments on an early draft of this note. We also thank the referees of the article for comments and suggestions. 
The author is funded by the Natural Sciences and Engineering Research Council of Canada, NSERC.

\bibliographystyle{plain}

\end{document}